\documentclass[11pt]{article}
\usepackage{amsmath}
\usepackage{amsfonts}
\usepackage{graphicx}
\usepackage{amssymb}
\usepackage{leftidx}

\usepackage[notref,notcite]{showkeys}

\newtheorem{condition**}{A*}
\newtheorem{condition***}{C*}
\newtheorem{condition*}{C}

\newtheorem{proposition}{Propsition}[section]

\newtheorem{definition}{Definition}[section]
\newtheorem{theorem}{Theorem}[section]
\newtheorem{lemma}{Lemma}[section]
\newtheorem{remark}{Remark}[section]

\newenvironment{keywords}{{\bf Key words: }}{}

\textwidth= 160 mm
\textheight= 240 mm
\oddsidemargin=2 mm
\topskip 0.3cm
\topmargin=-0.5in

\begin{document}

\title{Linear Quadratic Mean Field Game with Control Input Constraint}

\author{Ying Hu$\;^{a}$, Jianhui Huang$\;^{b}$, Xun Li$\;^{c,\ast}$\bigskip\\{\small ~$^{a}$IRMAR, Universit\'e Rennes 1, Campus de Beaulieu, 35042 Rennes Cedex, France}\\{\small $^{b}$Department of Applied Mathematics, The Hong Kong Polytechnic University, Hong Kong}\\{\small ~$^{c}$Department of Applied Mathematics, The Hong Kong Polytechnic University, Hong Kong}}

\maketitle

\begin{abstract}
In this paper, we study a class of linear-quadratic (LQ) mean-field games in which the individual
control process is constrained in a closed convex subset $\Gamma$ of full space $\mathbb{R}^{m}$.
The decentralized strategies and consistency condition are represented by a class of mean-field forward-backward stochastic differential equation (MF-FBSDE)
with projection operators on $\Gamma$. The wellposedness of consistency condition system is obtained using the monotonicity condition method. The related $\epsilon$-Nash equilibrium property is also verified.
\end{abstract}

\begin{keywords} $\epsilon$-Nash equilibrium,
Mean-field forward-backward stochastic differential equation (MF-FBSDE), Linear-quadratic constrained control, Projection, Monotonic condition.
\end{keywords}

\footnotetext[1]{{\scriptsize Corresponding author: Ying Hu (Ying.Hu@univ-rennes1.fr)}}
\renewcommand{\thefootnote}{\arabic{footnote}}
\footnotetext[2]{{\scriptsize The work of Ying Hu is supported  by Lebesgue center of mathematics ``Investissements d'avenir"
program - ANR-11-LABX-0020-01, by ANR-15-CE05-0024-02 and by ANR-MFG
; The work of James Jianhui Huang is supported by PolyU G-YL04, RGC Grant 502412,
15300514; The work of Xun Li is supported by PolyU G-UA4N, Hong Kong RGC under grants 15224215 and 15255416.}}
\renewcommand{\thefootnote}{\fnsymbol{footnote}}
\footnotetext{\textit{{\scriptsize E-mail addresses:}}
{\scriptsize Ying.Hu@univ-rennes1.fr (Ying\ Hu); majhuang@polyu.edu.hk (Jianhui\ Huang); malixun@polyu.edu.hk (Xun \ Li).}}

\textbf{AMS Subject Classification: }60H10, 60H30, 91A10, 91A25, 93E20\medskip

\section{Introduction}

Our starting point comes from the recently well-studied mean-field games (MFGs) for large-population system. The large-population system arises naturally in various fields such as economics,
engineering, social science and operational research, etc. The most salient feature of large-population system is
the existence of a large number of individually negligible agents (or players) which are interrelated in their dynamics and (or) cost functionals
via the state-average (in linear case) or more generally, the generated empirical measure over the whole population (in nonlinear case). Because of this highly complicated coupling feature,
it is intractable for a given agent to employ the centralized optimization strategies based on the information of all its peers in large-population system. Actually, this will bring considerably high computational complexity in a large-scale manner. Alternatively, one reasonable and practical direction is to investigate the related decentralized strategies based on local information only.
By local information, we mean that the related strategies should be designed upon the individual state (or, random noise) of given agent together with some mass-effect quantities which can be computed in off-line manner.

Along this research direction, one efficient and tractable methodology leading to decentralized strategies is the MFGs which generally
lead to a coupled system of HJB equation and Fokker-Planck (FP) equation in nonlinear case. In principle,
the procedure of MFGs consists of the following four main steps (see \cite{ll}, \cite{BFY}, \cite{hcm06}, \cite{H-C-M 2007}, \cite{CD}, \cite{wz}, etc):
in Step 1, it is necessary to analyze the asymptotic behavior of state-average when the agent number $N$ tends to
infinity and introduce the related state-average limiting term. Of course, this limiting term is undetermined at this moment,
thus it should be treated as some exogenous ``frozen" term; Step 2 turns to study the related limiting optimization problem
(which is also called auxiliary or tracking problem) by adopting the state-average instead of its frozen limit term.
The initial high-coupled optimization problems of all agents are thus decoupled and only parameterized by this generic frozen limit.
The related decentralized optimal strategy can be analyzed using standard control techniques such as dynamic programming principle (DPP)
or stochastic maximum principle (SMP) (see e.g., \cite{yz}). As a result, some HJB equation (due to DPP) or Hamiltonian system (due to SMP) will be obtained to characterize this decentralized optimality; Step 3 aims to determine the frozen state-average limit by some consistency condition: when applying the optimal decentralized strategies derived in Step 2,
the state-average limit should be reproduced as the agents number tends to infinity. Accordingly,
some fixed-point analysis should be applied here and some FP equation will be introduced and coupled with the HJB equation in Step 2.
As the necessary verification, Step 4 will show that the derived decentralized strategies should possess the $\epsilon$-Nash equilibrium properties. A comprehensive survey of MFG can be found in
\cite{pc}.

For further analysis of MFGs, the interested readers may refer to \cite{G-L-L 2011} for a
survey of mean-field games focusing on the partial differential equation aspect and related real applications;
\cite{BFY} for more recent MFG studies and the related mean-field type control; \cite{CD} for the probabilistic analysis of a large class of
stochastic differential games for which the interaction between the players is of mean-field type; \cite{cfs} for
the mean-field game where considerable interrelated banks share the system risk and common noise;
\cite{T-Z-B 2014} for a class of risk-sensitive mean-field stochastic differential games;
\cite{hmc06} for MFGs with nonlinear diffusion dynamics and their relations to McKean-Vlasov particle system.
It is remarkable that there exists a substantial literature body to the study of MFGs in linear-quadratic (LQ) framework.
Here, we mention a few of them which are more relevant to our current work: \cite{H 2010} the mean-field LQ games with a major player and a large number of minor players, \cite{H-C-M 2007}
the mean-field LQ games with non-uniform agents through the state-aggregation by empirical distribution, \cite{N-H 2012}
the mean-field LQ mixed games with continuum-parameterized minor players.

In this paper, we discuss the linear-quadratic (LQ) mean-field game where the individual control is constrained in a closed convex set $\Gamma$ of full space: $\Gamma \subset \mathbb{R}^m$. The LQ problems with control constraint arise naturally from various practical applications. For instance, the no-shorting constraint in portfolio selection leads to the LQ control with positive control ($\Gamma=\mathbb{R}^{m}_{+},$ the positive orthant). Moreover, due to general market accessibility constraint, it is also interesting to study the LQ control with more general closed convex cone constraint (see \cite{hz}). As a response, this paper investigates the LQ dynamic game of large-population system with general closed convex control constraint. Our investigation is mainly sketched as follows. First, applying the maximum principle, the optimal decentralized response is characterized through some Hamiltonian system with projection operator upon the constrained set $\Gamma$. Second, the consistency condition system is connected to the well-posedness of some mean-field forward-backward stochastic differential equation (MF-FBSDE). Next, we present some monotonicity condition of this MF-FBSDE to obtain its uniqueness and existence. Last, the related approximate Nash equilibrium property is also verified. We derive the MFG strategy in its open-loop manner. Consequently, the approximate Nash equilibrium property is verified under the open-loop strategies perturbation and some estimates of forward-backward SDE are involved. In addition, all agents are set to be statistically identical thus the limiting control problem and fixed-point arguments are given for a \emph{representative} agent. In case the agents are heterogeneous with different parameters, the similar procedure to MFG strategies can be proceeded via the introduction of index indicator and empirical state-average statistics. 

The reminder of this paper is structured as follows: Section 2 formulates the LQ MFGs with control constraint.
The decentralized strategies are derived with the help of a forward-backward SDE with projection operators.
The consistency condition is also established. Section 3 verifies the $\epsilon-$Nash equilibrium of the decentralized strategies.
Section 4 is appendix.


\section{Mean-Field LQG Games with Control Constraint}

Throughout this paper, we denote the $k$-dimensional Euclidean space by $\mathbb{R}^{k}$ with standard Euclidean norm $|\cdot|$ and standard Euclidean inner product $\langle\cdot,\cdot\rangle$.
The transpose of a vector (or matrix) $x$ is denoted by $x^{T}$. $\textrm{Tr}(A)$ denotes the trace of a square matrix $A$. Let $\mathbb{R}^{m \times n}$ be the Hilbert space consisting of all ($m\times n$)-matrices with the inner product $\langle A,B\rangle:=\textrm{Tr}(AB^{T})$ and the norm $|A|:=\langle A,A\rangle^{\frac{1}{2}}$.
Denote the set of symmetric $k \times k$ matrices with real elements by $S^k$.
If $M \in S^k$ is positive (semi)definite,
we write $M>\ (\geq)\ 0$. $L^{\infty}(0, T; \mathbb{R}^{k})$ is the space of uniformly bounded $\mathbb{R}^{k}-$valued functions.
If $M(\cdot)\in L^{\infty}(0, T; S^k)$ and $M(t)>\ (\geq)\ 0$ for all $t\in[0,T]$,
we say that $M(\cdot)$ is positive (semi) definite, which is denoted by $M(\cdot)>\ (\geq)\ 0$. $L^{2}(0, T; \mathbb{R}^{k})$
is the space of all $\mathbb{R}^{k}-$valued functions satisfying $\int_0^{T}|x(t)|^{2}dt<\infty.$ \\

Consider a finite time horizon $[0,T]$ for fixed $T>0$. We assume $(\Omega,
\mathcal F, \{\mathcal{F}_t\}_{0 \leq t \leq T}, P)$ is a complete, filtered probability space on which a
standard $N$-dimensional Brownian motion $\{W_i(t),\ 1\le i\leq N\}_{0 \leq t \leq T}$ is defined. For given filtration $\mathbb{F}=\{\mathcal{F}_t\}_{0 \leq t \leq T},$
let $L^{2}_{\mathbb{F}}(0, T; \mathbb{R}^{k})$ denote the
space of all $\mathcal{F}_t$-progressively measurable $\mathbb{R}^{k}$-valued processes satisfying $\mathbb{E}\int_0^{T}|x(t)|^{2}dt<\infty.$
Let $L^{2, \mathcal{E}_0}_{\mathbb{F}}(0, T; \mathbb{R}^{k}) \subset L^{2}_{\mathbb{F}}(0, T; \mathbb{R}^{k})$ the subspace satisfying $\mathbb{E}x_t \equiv 0$ for
$x_{\cdot} \in L^{2, \mathcal{E}_0}_{\mathbb{F}}(0, T; \mathbb{R}^{k}).$

Now let us consider a large-population system with $N$ weakly-coupled negligible agents $\{\mathcal{A}_{i}\}_{1 \leq i \leq N}$. The state $x^i$
for each $\mathcal{A}_{i}$ satisfies the following controlled linear stochastic system:
\begin{equation}\label{o1}
\left\{\begin{aligned}
dx^i(t)= & \, [A(t)x^i(t)+B(t)u_i(t)+F(t) x^{(N)}(t)+b(t)]dt \\
         & +[D(t)u_i(t)+\sigma(t)]dW_i(t), \\
\quad x^i(0)= & \, x \in \mathbb{R}^{n},
\end{aligned}\right.
\end{equation}
where $x^{(N)}(\cdot)=\frac{1}{N}\sum_{i=1}^{N}x^{i}(\cdot)$ is the state-average, $(A(\cdot), B(\cdot), F(\cdot), b(\cdot);  D(\cdot), \sigma(\cdot))$
 are matrix-valued functions with appropriate dimensions to be identify soon. For sake of presentation, we set all agents are homogeneous or statistically symmetric
 with same coefficients $(A, B, F, b; D, \sigma)$ and deterministic initial states $x$.

Now we identify the information structure of large population system: $\mathbb{F}^{i}=\{\mathcal F^{i}_t\}_{0 \leq t \leq T}$
 is the natural filtration generated by $\{W_i(t), 0 \leq t \leq T\}$ and augmented by all $P-$null sets in
 $\mathcal F.$ $\mathbb{F}=\{\mathcal F_t\}_{0 \leq t \leq T}$ is the natural filtration generated by $\{W_i(t), 1 \leq i \leq N, 0 \leq t \leq T\}$
 and augmented by all $P-$null sets in $\mathcal F.$ Thus, $\mathbb{F}^{i}$ is the individual decentralized information of $i^{th}$ Brownian motion while $\mathbb{F}$
 is the centralized information driven by all Brownian motion components. Note that the heterogeneous noise $W_i$ is
 specific for individual agent $\mathcal{A}_{i}$ but $x^i(t)$ is adapted to $\mathcal{F}_t$ instead of $\mathcal{F}^{i}_t$
 due to the coupling state-average $x^{(N)}.$

The admissible control $u_i\in \mathcal{U}^{c}_{ad}$ where the admissible control set $\mathcal{U}^{c}_{ad}$ is defined as
$$\mathcal{U}^{c}_{ad}:=\{u_i(\cdot)|u_i(\cdot)\in L^{2}_{\mathbb{F}}(0, T; \Gamma)\},\ 1 \leq i \leq N,
$$where $\Gamma \subset \mathbb{R}^{m}$ is a closed convex set. Typical examples of such set is $\Gamma=\mathbb{R}^{m}_{+}$
which represents the positive control. Moreover, we can also define decentralized control as $u_i\in \mathcal{U}_{ad}^{d,i}$,
where the admissible control set $\mathcal{U}_{ad}^{d,i}$ is defined as
$$
\mathcal{U}_{ad}^{d, i}:=\{u_i(\cdot)|u_i(\cdot)\in L^{2}_{\mathbb{F}^{i}}(0, T; \Gamma)\},\ 1\leq i \leq N.
$$
Note that both $\mathcal{U}_{ad}^{d, i}$ and $\mathcal{U}^{c}_{ad}$ are defined in open-loop sense. Let $u=(u_1, \cdots, u_i, \cdots, u_{N})$ denote the set of control strategies of all $N$ agents and
$u_{-i}=(u_1, \cdots, u_{i-1},$ $u_{i+1}, \cdots, u_{N})$
denote the control strategies set except the $i^{th}$ agent $\mathcal{A}_i.$ Introduce the cost functional of $\mathcal{A}_i$ as
\begin{equation}\label{original cost}
\begin{aligned}
    \mathcal {J}_i(u_i,u_{-i})=&\frac{1}{2}\mathbb{E}\bigg [ \int_0^T \Big <Q(t)\big(x^i(t)-x^{(N)}(t)\big),x^i(t)-x^{(N)}(t)\Big>\\
    &+\big<R(t)u_i(t),u_i(t)\big>dt+\Big<G\big(x^i(T)-x^{(N)}(T)\big),x^i(T)-x^{(N)}(T)\Big>\bigg].
\end{aligned}
\end{equation}
We impose the following assumptions:
\begin{description}
  \item[(H1)] $A(\cdot),  F(\cdot) \in L^\infty(0,T;S^{n}), B(\cdot), D(\cdot) \in L^\infty(0,T;\mathbb{R}^{n\times m}), b(\cdot),
  \sigma(\cdot) \in L^\infty(0,T;\mathbb R^n);$
    \item[(H2)] $Q(\cdot)\in L^\infty(0,T;S^n),Q(\cdot)\geq0, R(\cdot)\in L^\infty(0,T;S^m),R(\cdot)>0$
    and $R^{-1}(\cdot)\in L^\infty(0,T;S^m)$, $G\in S^n,G\geq0$.
\end{description}
It follows that $(\ref{o1})$ admits a unique solution $x^i(\cdot) \in L^{2}_{\mathbb{F}}(0, T; \mathbb{R}^{n})$ under
admissible control $u_i$ with (\textbf{H1}), (\textbf{H2}). Now, we formulate the large population LQG games with control constraint (\textbf{CC}).\\

\textbf{Problem (CC).}
Find an open-loop Nash equilibrium strategies set $\bar{u}=(\bar{u}_1,\bar{u}_2,\cdots,\bar{u}_N)$ satisfying
$$
\mathcal{J}_i(\bar{u}_i(\cdot),\bar{u}_{-i}(\cdot))=\inf_{u_i(\cdot)\in \mathcal{U}_{ad}^c}\mathcal{J}_i(u_i(\cdot),\bar{u}_{-i}(\cdot))
$$
where $\bar{u}_{-i}$ represents $(\bar{u}_1,\cdots,\bar{u}_{i-1},\bar{u}_{i+1},\cdots, \bar{u}_N)$, the strategies of all agents except $\mathcal{A}_i$.

The study of (\textbf{CC}) is of heavy computational burden due to the highly-complicated coupling structure among these agents. Alternatively, one efficient method to search the approximate Nash equilibrium is the mean-field game theory, which bridges the ``centralized" LQG games to the limiting LQG control problems, as the number of agents tends to infinity. To this end, we need to construct some auxiliary control problem using the frozen state-average limit. Based on it, we can find the decentralized strategies by consistency condition. More details are given below. Introduce the following auxiliary problem for $\mathcal{A}_i:$
\[
\left\{\begin{aligned}
dx^i(t)=& \, [A(t)x^i(t)+B(t)u_i(t)+F(t) z(t)+b(t)]dt \\
&+[D(t)u_i(t) +\sigma(t)]dW_i(t), \\
x^i(0)= & \, x \in \mathbb{R}^{n},
\end{aligned}\right.
\]
and limiting cost functional is given by
\begin{equation}\label{limit cost}
\begin{aligned}
    J_i(u_i)=&\frac{1}{2}\mathbb{E}\bigg [ \int_0^T \Big <Q(t)\big(x^i(t)-z(t)\big),x^i(t)-z(t)\Big>\\
    &+\big<R(t)u_i(t),u_i(t)\big>dt+\Big<G\big(x^i(T)-z(T)\big),x^i(T)-z(T)\Big>\bigg].
\end{aligned}
\end{equation}
Now we formulate the following limiting stochastic optimal control (SOC) problem with control constraint (\textbf{LCC}). \\

\textbf{Problem (LCC).} For the $i^{th}$ agent, $i=1,2,\cdots,N,$ find $u^\ast_i(\cdot)\in \mathcal{U}_{ad}^{d,i}$ satisfying
$$J_i(u^\ast_i(\cdot))=\inf_{u_i(\cdot)\in \mathcal{U}_{ad}^{d,i}}J_i(u_i(\cdot)).$$
Then $u^\ast_i(\cdot)$ is called a decentralized optimal control for Problem (\textbf{LCC}).
Note that the cost functional is strictly convex and coercive thus it admits a unique optimal control ${u}_i^{\ast}$.
Now we apply the maximum principle method to characterize $u_i^{\ast}$ with the optimal state ${x}^{i,\ast}.$
First, introduce the following adjoint process
\begin{equation*}
    \left\{
    \begin{aligned}
      dp^i&=-\big[A^Tp^i-Q(x^{i,\ast}-z)\big]dt+q^idW_i(t),\\
      p^i(T)&=-G\big(x^{i,\ast}(T)-z(T)\big).
    \end{aligned}
    \right.
\end{equation*}
Applying the maximum principle, the Hamiltonian function can be expressed by
\begin{equation}\label{Hamiltonian function}
\begin{aligned}
    H^i&=H^i(t,p^i,q^i,x^i,u_i)=\big<p^i,Ax^i+Bu_i+Fz+b\big>\\
    &+\big<q^i,Du_i+\sigma\big>-\frac{1}{2}\big<Q(x^i-z),x^i-z\big>-\frac{1}{2}\big<Ru_i,u_i\big>.
\end{aligned}
\end{equation}
Since $\Gamma$ is a closed convex set, then maximum principle reads as the following local form
\begin{equation}\label{convex maximum principle}
  \left \langle \frac{\partial H^i}{\partial u_i}(t,p^{i,\ast},q^{i,\ast},x^{i,\ast},u^{i,\ast}),u-u^{i,\ast}\right\rangle\leq 0,
  \quad \text{ for all } u\in\Gamma, \text{ a.e. } t\in[0,T],\ \mathbb{P}-a.s.
\end{equation}
Noticing  (\ref{Hamiltonian function}), then (\ref{convex maximum principle}) yields that
\[
   \left\langle B^{T}p^{i,\ast}+D^{T}q^{i,\ast}-Ru^{i,\ast},u-u^{i,\ast}\right\rangle\leq 0, \text{ for all }
   u\in\Gamma, \text{ a.e. } t\in[0,T],\ \mathbb{P}-a.s.
\]
or equivalently (noticing $R>0$),
\begin{equation}\label{optimal control condition}
   \left\langle R^{\frac{1}{2}}[R^{-1}(B^{T}p^{i,\ast}+D^{T}q^{i,\ast})-u^{i,\ast}],R^{\frac{1}{2}}(u-u^{i,\ast})\right\rangle\leq 0,
   \text{ for all } u\in\Gamma, \text{ a.e. } t\in[0,T], \ \mathbb{P}-a.s.
\end{equation}
If we take the following norm on $\Gamma\subset \mathbb{R}^{m}$ (which is equivalent to its Euclidean norm)
\[
 \|x\|^2_{R}=\left\langle \left\langle x,x\right\rangle\right\rangle:=\left\langle R^{\frac{1}{2}}x,R^{\frac{1}{2}}x\right\rangle,
\]
and by the well-known results of convex analysis, we obtain that (\ref{optimal control condition})
is equivalent to
\begin{equation*}
   u^{i,\ast}(t)=\mathbf{P}_{\Gamma}[R^{-1}(t)(B^{T}(t)p^{i,\ast}(t)+D^{T}(t)q^{i,\ast}(t))],
   \quad\text{ a.e. } t\in[0,T], \ \mathbb{P}-a.s.,
\end{equation*}
where $\mathbf{P}_{\Gamma}(\cdot)$ is the projection mapping from $\mathbb{R}^m$ to its closed convex subset $\Gamma$
under the norm $\|\cdot\|_{R}$. For more details, see Appendix. From now on,
we denote
\[
\varphi(t,p,q):=\mathbf{P}_{\Gamma}[R^{-1}(t)(B^{T}(t)p+D^{T}(t)q)].
\]
Then the related Hamiltonian system becomes
\begin{equation*}
\left\{
    \begin{aligned}
      dx^{i,*}&=\Big[Ax^{i,*}+B\varphi(p^{i,*},q^{i,*})+F z+b\Big]dt+\Big[D\varphi(p^{i,*},q^{i,*})+\sigma\Big]dW_i(t),\\
            dp^{i,*}&=-\big[A^Tp^{i,*}-Q(x^{i,*}-z)\big]dt+q^{i,*}dW_i(t),\\
         x^{i,*}(0)&=x,\quad \quad p^{i,*}(T)=-G\big(x^{i,*}(T)-z(T)\big).
    \end{aligned}
    \right.
\end{equation*}
By the consistency condition, it follows that
\begin{equation}\label{limit average process}
    z(\cdot)=\lim_{N\rightarrow+\infty}\frac{1}{N}\sum_{i=1}^{N}x^{i,*}(\cdot)=\mathbb{E}x^{i,*}(\cdot).
\end{equation}
Thus, by replacing $z$ by $\mathbb{E}x^{i,*}$ in above, we get the following system
\begin{equation*}
\left\{
    \begin{aligned}
      dx^{i,*}&=\Big[ Ax^{i,*}+B\varphi(p^{i,*},q^{i,*})+F\mathbb{E}x^{i,*}+b\Big]dt
      +\Big[D\varphi(p^{i,*},q^{i,*})+\sigma\Big]dW_i(t),\\
       dp^{i,*}&=-\big[A^Tp^{i,*}-Q(x^{i,*}-z)\big]dt+q^{i,*}dW_i(t),\\
         x^{i,*}(0)&=x,\quad \quad p^{i,*}(T)=-G\big(x^{i,*}(T)-z(T)\big).
    \end{aligned}
    \right.
\end{equation*}
We have the following consistency condition system for generic agent (we suppress subscript here):
\begin{equation}\label{cc}
\left\{
    \begin{aligned}
      dx&=\Big[Ax+B\varphi(p,q)+F\mathbb{E}x+b\Big]dt+\Big[D\varphi(p,q)+\sigma\Big]dW_t, \\
      -dp&=\big[A^Tp-Q(x-\mathbb{E}x)\big]dt-qdW_t, \\
      x_0&=x,\quad \quad p_{T}=-G\big(x_{T}-\mathbb{E}x_T\big).
    \end{aligned}
    \right.
\end{equation}
The above system is a nonlinear mean-field forward-backward SDE (MF-FBSDE) with projection operator.
It characterizes the state-average limit $z=\mathbb{E}{x}$ and MFG strategies
$\bar{u}_i=\varphi(p, q)$ for a generic agent in the combined manner. As an important issue, we need to prove the above consistency condition system admits a unique solution.
We first present the following uniqueness and existence result.
\begin{theorem}Under \emph{(\textbf{H1}), (\textbf{H2})}, there exists a unique adapted solution $(x, p, q) \in L_{\mathbb{F}^{W}}^{2}(0, T; \mathbb{R}^{n}) \times L_{\mathbb{F}^{W}}^{2, \mathcal{E}_0}(0, T; \mathbb{R}^{n})\times L_{\mathbb{F}^{W}}^{2}(0, T; \mathbb{R}^{n})$ to system \eqref{cc}.\end{theorem}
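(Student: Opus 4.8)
The system \eqref{cc} is a coupled mean-field FBSDE where the nonlinearity enters only through the projection map $\varphi(p,q)=\mathbf{P}_{\Gamma}[R^{-1}(B^{T}p+D^{T}q)]$. The plan is to establish well-posedness by the monotonicity method of Hu--Peng / Peng--Wu, treating the forward variable $x$ and backward pair $(p,q)$ as coupled through a monotone generator. I would first recall the key regularity property of the projection: since $\mathbf{P}_{\Gamma}$ is the projection onto a closed convex set under the norm $\|\cdot\|_{R}$, it is nonexpansive in that norm, i.e.
\[
\big\langle R^{\frac12}(\mathbf{P}_\Gamma a - \mathbf{P}_\Gamma b),\, R^{\frac12}(\mathbf{P}_\Gamma a - \mathbf{P}_\Gamma b)\big\rangle \le \big\langle R^{\frac12}(\mathbf{P}_\Gamma a - \mathbf{P}_\Gamma b),\, R^{\frac12}(a-b)\big\rangle,
\]
which will be the decisive ingredient controlling the nonlinearity. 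This firm-nonexpansiveness (co-coercivity) of the projection is what converts the apparent nonlinearity into a monotone perturbation.

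Next I would set up the monotonicity framework. Take two solutions $(x,p,q)$ and $(x',p',q')$, write $(\hat x,\hat p,\hat q)$ for their differences, and apply It\^o's formula to $\langle \hat p, \hat x\rangle$ on $[0,T]$. The terminal and initial conditions are designed so that the boundary terms produce the nonnegative quadratic forms $\langle G(\hat x_T - \IE\hat x_T), \hat x_T - \IE\hat x_T\rangle \ge 0$ and a vanishing contribution at $t=0$ (since $\hat x_0 = 0$). The drift and diffusion cross-terms should collapse — the $A$, $A^{T}$, and $F\IE x$ contributions cancel against the adjoint structure, with the mean-field terms handled by taking expectations and using that $\IE\langle \hat p, F\IE\hat x\rangle = \langle \IE\hat p, F\IE\hat x\rangle$ together with the constraint $\IE p \equiv 0$ encoded in the space $L^{2,\mathcal{E}_0}_{\mathbb{F}^W}$. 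After these cancellations I expect the identity to reduce to
\[
\IE\int_0^T \Big[\langle Q(\hat x - \IE\hat x), \hat x - \IE\hat x\rangle + \langle B^T\hat p + D^T\hat q,\, \varphi(p,q)-\varphi(p',q')\rangle\Big]\,dt \le 0,
\]
and the firm-nonexpansiveness above forces the inner-product term involving $\varphi$ to have the right sign, yielding that each nonnegative term must vanish. From $Q\ge0$, $G\ge0$, and $R>0$ one then extracts $\hat x = \IE\hat x$, $\hat x_T = \IE\hat x_T$, and ultimately $\varphi(p,q)=\varphi(p',q')$; feeding this back into the BSDE and using standard BSDE estimates gives $\hat p = \hat q = 0$ and then $\hat x = 0$, establishing uniqueness.

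For existence I would use the continuation (method of continuity) argument: introduce a family of systems parametrized by $\lambda\in[0,1]$ interpolating between \eqref{cc} and a decoupled system with known solvability, and show that the set of $\lambda$ for which the system is solvable is both open and closed. The a priori estimate driving this is precisely the monotonicity inequality just derived, applied to differences of solutions of the perturbed systems, which furnishes a uniform bound independent of $\lambda$; the linear structure of the drift plus the Lipschitz (nonexpansive) projection guarantees the one-step solvability that propagates along the bridge. I expect the main obstacle to be the careful bookkeeping of the mean-field coupling terms $F\IE x$ and $G(x_T - \IE x_T)$: one must verify that taking expectations in the It\^o identity genuinely annihilates these terms and that the solution space constraint $\IE p_t \equiv 0$ is consistent and propagated by the equation. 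Verifying this mean-field compatibility — rather than the projection estimate itself — is the delicate point, since it is what makes the monotonicity inequality close without leftover indefinite cross-terms.
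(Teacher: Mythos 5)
Your proposal matches the paper's proof in both halves: uniqueness is obtained there exactly as you describe, by applying It\^o's formula to $\langle\hat p,\hat x\rangle$, annihilating the mean-field cross term via $\mathbb{E}\hat p\equiv 0$, and using the monotonicity of $\mathbf{P}_\Gamma$ under the norm $\|\cdot\|_R$, while existence is proved by the same continuation-plus-contraction scheme of Hu--Peng and Peng--Wu. One small correction: since $Q$ is only positive semidefinite, the vanishing of the integrand yields $Q(\hat x-\mathbb{E}\hat x)\equiv 0$ rather than $\hat x=\mathbb{E}\hat x$, but that weaker conclusion (together with $G(\hat x_T-\mathbb{E}\hat x_T)=0$) is all that is needed to deduce $\hat p=\hat q=0$ from the BSDE and then $\hat x=0$ from the forward equation.
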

\begin{proof} (\textbf{Uniqueness}) Suppose that there exists two solutions: $(x^1,p^1,q^1)$, $(x^2,p^2,q^2)$ and denote$$
\hat{x}=x^1-x^2, \quad \hat{p}=p^1-p^2, \quad \hat{q}=q^1-q^2.$$
Then, we have
\begin{equation}\label{e221}
\left\{
        \begin{aligned}
      d\hat{x}&=\Big[A\hat{x}+B\widehat{\varphi}(\hat{p}, \hat{q})+F\mathbb{E}\hat{x}\Big]dt+D\widehat{\varphi}(\hat{p}, \hat{q})dW_t, \\
     -d\hat{p}&=\big[A^T\hat{p}-Q(\hat{x}-\mathbb{E}\hat{x})\big]dt-\hat{q}dW_t, \\
      \hat{x}_0&=0,\qquad \hat{p}_{T}=-G\big(\hat{x}_T-\mathbb{E}\hat{x}_T\big)
    \end{aligned}
    \right.
\end{equation}
with
\begin{equation*}
\begin{aligned}\widehat{\varphi}(\hat{p}, \hat{q})&:=\varphi(p^{1},q^{1})-\varphi(p^{2},q^{2}):=\mathbf{P}_{\Gamma}[R^{-1}(B^{T}p^{1}+D^{T}q^{1})]
-\mathbf{P}_{\Gamma}[R^{-1}(B^{T}p^{2}+D^{T}q^{2})].
\end{aligned}
\end{equation*}
First, taking the expectation in the second equation of (\ref{e221}) yields $\mathbb E\hat{p}=0$.
Applying It\^{o}'s formula to $\big<\hat{p},\hat{x}\big>$ and taking expectations on both sides:
\begin{equation*}
\begin{aligned}
    0&=\mathbb{E}\Big<G\big(\hat{x}_T-\mathbb{E}\hat{x}_T\big),\hat{x}_T\Big>\\
      &+\mathbb{E}\int_0^T\Big<(B^T\hat{p}_s+D^T\hat{q}_s), \widehat{\varphi}_s(\hat{p}, \hat{q})\Big> +\Big<\hat{x}_s, Q(\hat{x}_s-\mathbb{E}\hat{x}_s)\Big>ds+\mathbb{E}\int_0^T\big<\hat{p}_s, F\mathbb{E}\hat{x}_s\big>ds\\
      &\ge\mathbb{E}\Big<G^{\frac{1}{2}}\big(\hat{x}_T-\mathbb{E}\hat{x}_T\big),G^{\frac{1}{2}}\big(\hat{x}_T-\mathbb{E}\hat{x}_T\big)\Big>
      +\mathbb{E}\int_0^T\Big<Q^{\frac{1}{2}}(\hat{x}_s-\mathbb{E}\hat{x}_s), Q^{\frac{1}{2}}(\hat{x}_s-\mathbb{E}\hat{x}_s)\Big>ds.
\end{aligned}
\end{equation*}
Thus, we have, $G\big(\hat{x}_T-\mathbb{E}\hat{x}_T\big)=0$ and $Q\big(\hat{x}-\mathbb{E}\hat{x}\big)=0$
 which implies $\hat{p}_s\equiv 0,\hat{q}_s\equiv 0.$ Next, we have $\widehat{\varphi}_s(\hat{p}, \hat{q})\equiv0$ which further implies $\mathbb{E}\hat{x}_s \equiv 0,$ hence $\hat{x}_s \equiv 0.$ Hence the uniqueness follows.

\vskip 12pt

(\textbf{Existence}) Consider a family of parameterized FBSDE as follows:
\begin{equation*}
\left\{
    \begin{aligned}
      dx^{\alpha}&=\Big[\alpha {\mathbf{B}}(x, p^{\alpha}, q^{\alpha}, \mathbb{E}x^{\alpha})+\phi\Big]dt
      +\Big[\alpha \Xi(x, p^{\alpha}, q^{\alpha}, \mathbb{E}x^{\alpha})+\psi\Big]dW_t,\\
      -dp^{\alpha}&=\big[\alpha \mathbf{F}(x^{\alpha}, p^{\alpha}, \mathbb{E}x^{\alpha})+\gamma-\mathbb{E}\gamma\big]dt-q^{\alpha}dW_t, \\
      x^{\alpha}_0&=x,\quad \quad p^{\alpha}_{T}=-\alpha G\big(x^{\alpha}_{T}-\mathbb{E}x^{\alpha}_{T}\big)+\xi-\mathbb{E}\xi.
    \end{aligned}
    \right.
\end{equation*}
with
\begin{equation*}
\left\{
        \begin{aligned}
      {\mathbf{B}}&:= Ax+B\varphi(p,q)+F_1\mathbb{E}x+b, \\
     \Xi&:=D\varphi(p,q)+\sigma, \\
     \mathbf{F}&:=A^Tp-Q(x-\mathbb{E}x).
    \end{aligned}
    \right.
\end{equation*}
Here $(\phi, \psi, \gamma)$ are given process in $L_{\mathbb{F}^{W}}^{2}(0, T; \mathbb{R}^{n}) \times L_{\mathbb{F}^{W}}^{2}(0, T; \mathbb{R}^{n})\times L_{\mathbb{F}^{W}}^{2}(0, T; \mathbb{R}^{n}),$
and $\xi$ is a $\mathbb R^n$-valued square integrable random variable which is $\mathbb{F}_T^{W}$-measurable.   When $\alpha=0,$ we have a decoupled FBSDE whose solvability is trivial:
\begin{equation*}
\left\{
    \begin{aligned}
      dx&=\phi dt+\psi dW_t, \\
      -dp&=(\gamma-\mathbb{E}\gamma)dt-qdW_t, \\
      x_0&=x,\quad \quad p_{T}=\xi-\mathbb{E}\xi.
    \end{aligned}
    \right.
\end{equation*}
Denote $\mathcal{M}(0, T)=L_{\mathbb{F}^{W}}^{2}(0, T; \mathbb{R}^{n}) \times L_{\mathbb{F}^{W}}^{2, \mathcal{E}_0}(0, T; \mathbb{R}^{n})\times L_{\mathbb{F}^{W}}^{2}(0, T; \mathbb{R}^{n}).$ Now introduce a mapping ${I}_{\alpha_0}: (x, p, q) \in \mathcal{M}(0, T) \longrightarrow (X, P, Q) \in \mathcal{M}(0, T)$ via the following FBSDE:
\begin{equation*}
\left\{
    \begin{aligned}
      dX_t&=\Big[\alpha_0 \mathbf{B}(X_t, P_t, Q_t, \mathbb{E}X_t)+\delta \mathbf{B}(x_t, p_t, q_t, \mathbb{E}x_t)+\phi_t\Big]dt \\
        &+\Big[\alpha_0 \Xi(X_t, P_t, Q_t)+\delta \Xi(x_t, p_t, q_t)+\psi_t\Big]dW_t, \\
   -dP_t&=\Big[\alpha_0 \mathbf{F}(X_t, P_t, Q_t, \mathbb{E}X_t)+\gamma_t-\mathbb{E}\gamma+\delta\mathbf{F}(x_t, p_t, q_t, \mathbb{E}x_t)\Big]dt-Q_tdW_t, \\
      X_0&=x,\quad \quad P_{T}=-\alpha_0 G\big(X_T-\mathbb{E}X_T\big)-\delta G(x_{T}-\mathbb{E}x_T)+\xi-\mathbb{E}\xi.
    \end{aligned}
    \right.
\end{equation*}

Considering ${I}_{\alpha_0}: (x, p, q) \longrightarrow (X, P, Q)$ and ${I}_{\alpha_0}: (x', p', q') \longrightarrow (X', P', Q')$ and$$(\widehat{X}, \widehat{P}, \widehat{Q})=(X-X', P-P', Q-Q')$$
\begin{equation*}
\left\{
    \begin{aligned}
      d\widehat{X}_t&=\Big[\alpha_0 \widehat{\mathbf{B}}(\widehat{X}_t, \widehat{P}_t, \widehat{Q}_t, \mathbb{E}\widehat{X}_t)+\delta \widehat{\mathbf{B}}(\hat{x}_t, \hat{p}_t, \hat{q}_t, \mathbb{E}\hat{x}_t)\Big]dt\\
      &+\Big[\alpha_0 \widehat{\Xi}(\widehat{X}_t, \widehat{P}_t, \widehat{Q}_t)+\delta \widehat{\Xi}(\hat{x}_t, \hat{p}_t, \hat{q}_t)\Big]dW_t, \\
      -d\widehat{P}_t&=\Big[\alpha_0 \widehat{\mathbf{F}}(\widehat{X}_t, \widehat{P}_t, \widehat{Q}_t, \mathbb{E}\widehat{X}_t)+\delta(\widehat{\mathbf{F}}(\hat{x}_t, \hat{p}_t, \hat{q}_t, \mathbb{E}\hat{x}_t)\Big]dt-\widehat{Q}_tdW_t, \\
      \widehat{X}_0&=0,\quad \quad \widehat{P}_{T}=-\alpha_0 G\big(\widehat{X}_T-\mathbb{E}\widehat{X}_T\big)-\delta G(\hat{x}_{T}-\mathbb{E}\hat{x}_T),
    \end{aligned}
    \right.
\end{equation*}
with
\begin{equation*}
\left\{
        \begin{aligned}
     \widehat{\mathbf{B}}&:=\mathbf{B}(X_t, P_t, Q_t, \mathbb{E}X_t)-\mathbf{B}(X'_t, P'_t, Q'_t, \mathbb{E}X'_t), \\
     \widehat{\Xi}&:={\Xi}({X}_t, {P}_t, {Q}_t, \mathbb{E}{X}_t)-{\Xi}({X}'_t, {P}'_t, {Q}'_t, \mathbb{E}{X}'_t), \\
     \widehat{\mathbf{F}}&:=\mathbf{F}(X_t, P_t, Q_t, \mathbb{E}X_t)-\mathbf{F}(X'_t, P'_t, Q'_t, \mathbb{E}X'_t).
    \end{aligned}
    \right.
\end{equation*}

Note that $\mathbb{E}\widehat{P}_t\equiv 0$ because$$\widehat{\mathbf{F}}(X_t, P_t, Q_t, \mathbb{E}X_t)=A^{T}\widehat{P}-Q(\widehat{X}-\mathbb{E}\widehat{X}),  \quad \quad \text{and} \quad \mathbb{E}p_t \equiv 0.$$

Applying It\^{o} formula to $\big<\widehat{P},\widehat{X}\big>$ and taking expectations on both sides:
\begin{equation*}
\begin{aligned}
    &\mathbb{E}\Big<\widehat{X}_T, -\alpha_0 G\big(\widehat{X}_T-\mathbb{E}\widehat{X}_T\big)-\delta G(\hat{x}_{T}-\mathbb{E}\hat{x}_T)\Big>\\
      &=\mathbb{E}\int_0^T\Big<\widehat{X}_s, -\alpha_0 \widehat{\mathbf{F}}(\widehat{X}_s, \widehat{P}_s, \widehat{Q}_s, \mathbb{E}\widehat{X}_s)\Big>+\Big<\widehat{X}_s, -\delta\widehat{\mathbf{F}}(\hat{x}_s, \hat{p}_s, \hat{q}_s, \mathbb{E}\hat{x}_s)\Big> \\
      &+\Big<\widehat{P}_s, \alpha_0 \widehat{\mathbf{B}}(\widehat{X}_s, \widehat{P}_s, \widehat{Q}_s, \mathbb{E}\widehat{X}_s)\Big> +\Big<\widehat{P}_s, \delta\widehat{\mathbf{B}}(\hat{x}_s, \hat{p}_s, \hat{q}_s, \mathbb{E}\hat{x}_s)\Big>\\
      &+\Big<\widehat{Q}_s, \alpha_0 \widehat{\Xi}(\widehat{X}_s, \widehat{P}_s, \widehat{Q}_s)\Big>+\Big<\widehat{Q}_s, \delta\widehat{\mathbf{\Xi}}(\hat{x}_s, \hat{p}_s, \hat{q}_s)\Big>ds.\\
\end{aligned}
\end{equation*}

Rearranging the above terms, we have
\begin{equation*}
\begin{aligned}
    &\alpha_0\mathbb{E}\Big<\widehat{X}_T, G\big(\widehat{X}_T-\mathbb{E}\widehat{X}_T\big)\Big> +\mathbb{E}\int_0^T\alpha_0\left[\Big<\widehat{X}_s, -\widehat{\mathbf{F}}(\widehat{X}_s, \widehat{P}_s, \widehat{Q}_s, \mathbb{E}\widehat{X}_s)\Big>\right.\\
      &\left.+\Big<\widehat{P}_s, \widehat{\mathbf{B}}(\widehat{X}_s, \widehat{P}_s, \widehat{Q}_s, \mathbb{E}\widehat{X}_s)\Big>
      +\Big<\widehat{Q}_s, \widehat{\Xi}(\widehat{X}_s, \widehat{P}_s, \widehat{Q}_s)\Big>\right]ds \\
      &=\mathbb{E}\int_0^T\delta\left[\Big<\widehat{X}_s, \widehat{\mathbf{F}}(\hat{x}_s, \hat{p}_s, \hat{q}_s, \mathbb{E}\hat{x}_s)\Big>
      +\Big<\widehat{P}_s, -\widehat{\mathbf{B}}(\hat{x}_s, \hat{p}_s, \hat{q}_s, \mathbb{E}\hat{x}_s)\Big>\right. \\
      &\left.+\Big<\widehat{Q}_s, -\widehat{\Xi}(\hat{x}_s, \hat{p}_s, \hat{q}_s)\Big>\right]ds
      -\delta\mathbb{E}\Big<\widehat{X}_T, G(\hat{x}_{T}-\mathbb{E}\hat{x}_T)\Big>\\
\end{aligned}
\end{equation*}


Hence,
\begin{equation*}
\begin{aligned}
    &\alpha_0\mathbb{E} |G^{\frac{1}{2}}(\widehat{X}_T-\mathbb{E}\widehat{X}_T)|^{2}+\mathbb{E}\int_0^{T}\alpha_0|Q^{\frac{1}{2}}(\widehat{X}_s-\mathbb{E}\widehat{X}_s)|^{2}
    ds\\
    &\leq \alpha_0\mathbb{E}\Big<\widehat{X}_T, G\big(\widehat{X}_T-\mathbb{E}\widehat{X}_T\big)\Big> +\mathbb{E}\int_0^T\alpha_0\left[\Big<\widehat{X}_s, -\widehat{\mathbf{F}}(\widehat{X}_s, \widehat{P}_s, \widehat{Q}_s, \mathbb{E}\widehat{X}_s)\Big>\right.\\
      &\left.+\Big<\widehat{P}_s, \widehat{\mathbf{B}}(\widehat{X}_s, \widehat{P}_s, \widehat{Q}_s, \mathbb{E}\widehat{X}_s)\Big>
      +\Big<\widehat{Q}_s, \widehat{\Xi}(\widehat{X}_s, \widehat{P}_s, \widehat{Q}_s)\Big>\right]ds \\
      &=\mathbb{E}\int_0^T\delta\left[\Big<\widehat{X}_s, \widehat{\mathbf{F}}(\hat{x}_s, \hat{p}_s, \hat{q}_s, \mathbb{E}\hat{x}_s)\Big>
      +\Big<\widehat{P}_s, -\widehat{\mathbf{B}}(\hat{x}_s, \hat{p}_s, \hat{q}_s, \mathbb{E}\hat{x}_s)\Big>\right. \\
      &\left.+\Big<\widehat{Q}_s, -\widehat{\Xi}(\hat{x}_s, \hat{p}_s, \hat{q}_s)\Big>\right]ds
      -\delta\mathbb{E}\Big<\widehat{X}_T, G(\hat{x}_{T}-\mathbb{E}\hat{x}_T)\Big>\\
      &\leq \delta C_1 \mathbb{E}\int_0^{T} (|\hat{x}_s|^{2}+|\hat{p}_s|^{2}+|\hat{q}_s|^{2})ds+\delta C_1 \mathbb{E}\hat{x}_{T}^{2}+\delta C_1\mathbb{E}\int_0^{T} (|\widehat{X}_s|^{2}+|\widehat{P}_s|^{2}+|\widehat{Q}_s|^{2})ds+\delta C_1 \mathbb{E}\widehat{X}_{T}^{2}.
\end{aligned}
\end{equation*}

Then, by standard estimates of BSDE:
\begin{equation*}\begin{aligned}
    &\mathbb{E}\int_0^{T}\left(|\widehat{P}_s|^{2}+|\widehat{Q}_s|^{2}\right)ds\\& \leq \delta C_2 \mathbb{E}\int_0^{T} (|\hat{x}_s|^{2}+|\hat{p}_s|^{2}+
    |\hat{q}_s|^{2})ds+\delta C_2\mathbb E|\hat{x}_T|^2\\&+C_2\left(\alpha_0\mathbb{E} |G^{\frac{1}{2}}(\widehat{X}_T-\mathbb{E}\widehat{X}_T)|^{2}+\mathbb{E}\int_0^{T}\alpha_0|Q^{\frac{1}{2}}(\widehat{X}_s-\mathbb{E}\widehat{X}_s)|^{2}
    ds\right)\\
    &\le\delta C_3 \mathbb{E}\int_0^{T} (|\hat{x}_s|^{2}+|\hat{p}_s|^{2}+
    |\hat{q}_s|^{2})ds+\delta C_3\mathbb E|\hat{x}_T|^2.
\end{aligned}
\end{equation*}

Next, by the standard estimate of forward SDEs:
\begin{equation*}
\begin{aligned}
    &\mathbb{E}\int_0^{T}|\widehat{X}_s|^{2}ds+\mathbb{E}|\widehat{X}_T|^{2}\\& \leq \delta C_4 \mathbb{E}\int_0^{T} (|\hat{x}_s|^{2}+|\hat{p}_s|^{2}+|\hat{q}_s|^{2})ds+C_4\mathbb{E}\int_0^{T} \left(|\widehat{P}_s|^2+|\widehat{Q}_s|^{2}\right)ds\\& \leq \delta C_5\delta( \mathbb{E}\int_0^{T} (|\hat{x}_s|^{2}+|\hat{p}_s|^{2}+
    |\hat{q}_s|^{2})ds+\delta C_5\mathbb E|\hat{x}_T|^2
\end{aligned}
\end{equation*}
Based on the above estimates, we know the mapping $I$ satisfying$$\mathbb{E}\int_0^T \left(|\widehat{X}_s|^{2}+|\widehat{P}_s|^{2}+|\widehat{Q}_s|^{2}\right)ds+\mathbb{E}|\widehat{X}_T|^{2} \leq K\delta \left(\mathbb{E}\int_0^T \left(|\widehat{x}_s|^{2}+|\widehat{p}_s|^{2}+|\widehat{q}_s|^{2}\right)ds+\mathbb{E}|\widehat{x}_T|^{2}\right).$$It follows the mapping is a contraction and the existence follows immediately using the arguments presented in \cite{hp} and \cite{pw}.
\end{proof}


\section{$\epsilon$-Nash Equilibrium for Problem (\textbf{CC})}
In above sections, we can characterize the decentralized strategies $\{\bar{u}^i_t, 1\le i\le N\}$ of Problem (\textbf{CC})
through the auxiliary (\textbf{LCC}) and consistency condition system. For sake of presentation, we alter the notations of consistency condition system to be $(\alpha^{i}, \beta^{i}, \gamma^{i})$:
\begin{equation*}
\left\{
    \begin{aligned}
      d\alpha^{i}&=\Big[A\alpha^{i}+B\varphi(\beta^{i},\gamma^{i})+F\mathbb{E}\alpha^i+b\Big]dt + \Big[D\varphi(\beta^{i},\gamma^{i})+\sigma\Big]dW_i(t), \\
      d\beta^i&=-\big(A^T\beta^{i}-Q(\alpha^{i}-\mathbb{E}\alpha^{i})\big)dt+\gamma^idW_i(t), \\
      \alpha^{i}(0)&=x,\quad \quad \beta^i(T)=-G\big(\alpha^{i}(T)-\mathbb{E}\alpha^{i}(T)\big).
    \end{aligned}
    \right.
\end{equation*}
Now, we turn to verify the $\epsilon$-Nash equilibrium of them.
To start, we first present the definition of $\epsilon$-Nash equilibrium.
\begin{definition}\label{d1}
A set of strategies $\bar u_t^i\in \mathcal{U}^{c}_{ad}$,
$1\leq i\leq N,$ for $N$ agents is called to satisfy an
$\epsilon$-Nash equilibrium with respect to costs  $\mathcal J^i,\ 1\leq i\leq N,$
if there exists $\epsilon\geq0$ such that for any fixed $1\leq i\leq N$, we have
\begin{equation*}
\mathcal J^i(\bar{u}_t^i,\bar{u}_t^{-i})\leq \mathcal J^i(u_t^i,\bar{u}_t^{-i})+\epsilon,
\end{equation*}
when any alternative strategy $u^i\in \mathcal{U}^{c}_{ad}$ is applied by $\mathcal{A}_i$.
\end{definition}
\begin{remark}
If $\epsilon=0,$ then Definition \ref{d1} is reduced to the usual exact Nash equilibrium.
\end{remark}Now, we state the main result of this paper and its proof will be given later.
\begin{theorem}\label{Nash equilibrium theorem}
Under \emph{\textbf{(H1)-(H2)}}, $(\bar{u}_1,\bar{u}_2,\cdots,\bar{u}_N)$ is an $\epsilon$-Nash equilibrium of \emph{\textbf{Problem (CC)}}.
\end{theorem}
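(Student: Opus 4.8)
The plan is to run the standard mean-field verification scheme: compare the realized (coupled) game of Problem (\textbf{CC}) against the decoupled limiting problem (\textbf{LCC}), for which $\bar u_i=\varphi(\beta^i,\gamma^i)$ is \emph{exactly} optimal, and absorb the discrepancy into $\epsilon=O(N^{-1/2})$. For each agent three processes enter the argument: the \emph{realized} state $\tilde x^i$ solving the coupled system \eqref{o1} when every agent uses $\bar u_j$; the \emph{limiting} state $\bar x^i=\alpha^i$ of the consistency system, driven by $W_i$ only; and the frozen average $z=\mathbb{E}\alpha^i$. Since all agents are statistically identical with independent noises $W_i$ and a common deterministic initial datum, the triples $(\alpha^i,\beta^i,\gamma^i)$ are i.i.d., hence so are the $\bar x^i$, and $\mathbb{E}\bar x^i=z$ by construction \eqref{limit average process}.

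First I would establish the closeness estimate on the equilibrium path, namely $\sup_{0\le t\le T}\mathbb{E}|\tilde x^{(N)}(t)-z(t)|^2=O(N^{-1})$. Writing $\bar x^{(N)}=\frac1N\sum_i\bar x^i$, the law of large numbers for the i.i.d. family gives $\mathbb{E}|\bar x^{(N)}(t)-z(t)|^2=\frac1N\mathrm{Var}(\bar x^1(t))=O(N^{-1})$, using uniform second-moment bounds on $\bar x^1$. For the remaining piece, subtracting the averaged dynamics cancels the identical control and noise terms, leaving
$$d\big(\tilde x^{(N)}-\bar x^{(N)}\big)=\big[A(\tilde x^{(N)}-\bar x^{(N)})+F(\tilde x^{(N)}-z)\big]dt,$$
so Gronwall's inequality bounds $\tilde x^{(N)}-\bar x^{(N)}$ by the $O(N^{-1})$ quantity above. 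Combining the two pieces yields the claim, and a further Gronwall estimate applied to $d(\tilde x^i-\bar x^i)$ gives $\sup_t\mathbb{E}|\tilde x^i-\bar x^i|^2=O(N^{-1})$ for each fixed $i$.

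Next I would treat the deviation. Fix $i$ and any alternative $u_i\in\mathcal{U}^c_{ad}$; one may assume $\mathcal J_i(u_i,\bar u_{-i})\le\mathcal J_i(\bar u_i,\bar u_{-i})$, which by the coercivity $R>0$ in (\textbf{H2}) yields an a priori bound $\mathbb{E}\int_0^T|u_i|^2\,dt\le C$ independent of $N$. Let $y^j$ be the states under $(u_i,\bar u_{-i})$. Since only one agent deviates, the perturbation enters the averaged dynamics solely through the $1/N$-weighted term $\frac1N B(u_i-\bar u_i)$, whose mean-square contribution is negligible, so the dominant error remains the $O(N^{-1})$ law-of-large-numbers term; Gronwall then gives $\sup_t\mathbb{E}|y^{(N)}(t)-z(t)|^2=O(N^{-1})$ and, comparing $y^i$ with the limiting state $\hat y^i$ driven by $u_i$ and the frozen $z$, $\sup_t\mathbb{E}|y^i-\hat y^i|^2=O(N^{-1})$.

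Finally, since the costs \eqref{original cost} and \eqref{limit cost} are quadratic with bounded coefficients, replacing $(\tilde x^i,\tilde x^{(N)})$ by $(\bar x^i,z)$ and $(y^i,y^{(N)})$ by $(\hat y^i,z)$, the identity $|a|^2-|b|^2=\langle a-b,\,a+b\rangle$ with Cauchy--Schwarz converts the above $L^2$ estimates into $|\mathcal J_i(\bar u_i,\bar u_{-i})-J_i(\bar u_i)|=O(N^{-1/2})$ and $|\mathcal J_i(u_i,\bar u_{-i})-J_i(u_i)|=O(N^{-1/2})$. Using the optimality $J_i(\bar u_i)\le J_i(u_i)$ from Problem (\textbf{LCC}) and chaining,
$$\mathcal J_i(\bar u_i,\bar u_{-i})\le J_i(\bar u_i)+\tfrac{C}{\sqrt N}\le J_i(u_i)+\tfrac{C}{\sqrt N}\le\mathcal J_i(u_i,\bar u_{-i})+\tfrac{2C}{\sqrt N},$$
so the result holds with $\epsilon=O(N^{-1/2})\to0$. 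I expect the main obstacle to be the deviation estimate: one must verify that a single, possibly large, merely square-integrable control change perturbs the empirical average only at order $1/N$, which is exactly why the coercivity bound on $u_i$ is essential; throughout, the non-expansiveness of $\mathbf{P}_\Gamma$ under $\|\cdot\|_R$ (so that $\varphi$ is Lipschitz and each $\bar u_i$ is genuinely admissible and square-integrable) is what legitimizes the Gronwall steps.
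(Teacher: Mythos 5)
Your proposal is correct and follows essentially the same scheme as the paper's own proof: uniform second-moment bounds, the equilibrium-path estimates $\mathbb{E}|\breve{x}^{(N)}-z|^2=O(N^{-1})$ and $\mathbb{E}|\breve{x}^i-\alpha^i|^2=O(N^{-1})$ leading to $|\mathcal{J}_i(\bar u_i,\bar u_{-i})-J_i(\bar u_i)|=O(N^{-1/2})$ (the paper's Lemmas \ref{lemma for xN}--\ref{first lemma for cost}), the coercivity-based a priori bound \eqref{boundness of control} on any improving deviation, the analogous deviation-path estimates and cost closeness (Lemmas \ref{lemma 2}--\ref{lemma 4}), and the final chaining $\mathcal{J}_i(\bar u_i,\bar u_{-i})\le J_i(\bar u_i)+O(N^{-1/2})\le J_i(u_i)+O(N^{-1/2})\le \mathcal{J}_i(u_i,\bar u_{-i})+O(N^{-1/2})$. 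The only differences are minor: you apply the law of large numbers to the i.i.d.\ states $\alpha^i$ through the intermediate average $\frac1N\sum_i\alpha^i$ (so the control and noise terms cancel exactly before Gronwall), whereas the paper applies it to the i.i.d.\ controls $\varphi(\beta^i,\gamma^i)$ and bounds the $N^{-1}$-scaled stochastic integrals directly by independence; and note that your chaining uses $J_i(\bar u_i)\le J_i(u_i)$ for a centralized deviation $u_i\in\mathcal{U}^{c}_{ad}$, which strictly requires extending the optimality of $\bar u_i$ in Problem (\textbf{LCC}) from $\mathcal{U}^{d,i}_{ad}$ to $\mathcal{U}^{c}_{ad}$ (e.g.\ via the maximum principle and uniqueness of the constrained Hamiltonian system in the larger filtration) --- the same point the paper leaves implicit by taking its perturbation in $\mathcal{U}^{d,i}_{ad}$.
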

The proof of Theorem \ref{Nash equilibrium theorem} needs several lemmas which are presented later.
For agent $\mathcal{A}_i,$ recall that its decentralized open-loop optimal strategy is
$\bar{u}_i=\varphi(\beta^i,\gamma^i)$.
The decentralized state $\breve{x}_t^{i}$ is
\begin{equation}\label{decentralized state}
\left\{
        \begin{aligned}d\breve{x}^{i}&=\left[A\breve{x}^{i}\!+\!B\varphi(\beta^i,\gamma^i)
        \!+\!F\breve{x}^{(N)}\!+\!b\right]dt
        \!+\!\left[\!D\varphi(\beta^i,\gamma^i)
      +\!\sigma\right]dW_i(t),\\
      d\alpha^{i}&=\left[A\alpha^{i}\!+\!B\varphi(\beta^i,\gamma^i)\!+\!F\mathbb{E}\alpha^{i}\!+\!b\right]dt\!+\!
      \left[\!D\varphi(\beta^i,\gamma^i)+\!\sigma\right]dW_i(t),\\
      d\beta^{i}&=-\big[A^T\beta^{i}-Q(\alpha^{i}-\mathbb{E}\alpha^{i})\big]dt+\gamma^{i}dW_i(t),\\
      \breve{x}^{i}(0)&={\alpha}^{i}(0)=x,\qquad {\beta}^{i}(T)=-G\big({\alpha}^{i}(T)-\mathbb{E}{\alpha}^{i}(T)\big),
    \end{aligned}
    \right.
\end{equation}
where $\breve{x}^{(N)}=\frac{1}{N}\sum_{i=1}^{N}\breve{x}^{i}$.
For comparison, we prefer to write the limiting state $\alpha_t^{i}$ once again,
\begin{equation}\label{decentralized limiting state}
\left\{
  \begin{aligned}
d\alpha^{i}&=\left[A\alpha^{i}\!+\!B\varphi(\beta^i,\gamma^i)\!+\!F\mathbb{E}\alpha^{i}\!+\!b\right]dt\!+\!
      \left[\!D\varphi(\beta^i,\gamma^i)\!+\!\sigma\right]dW_i(t),\\
      d\beta^{i}&=-\big[A^T\beta^{i}-Q(\alpha^{i}-\mathbb{E}\alpha^{i})\big]dt+\gamma^{i}dW_i(t),\\
      \alpha^{i}(0)&=x,\qquad {\beta}^{i}(T)=-G\big({\alpha}^{i}(T)-\mathbb{E}{\alpha}^{i}(T)\big).
    \end{aligned}
    \right.
\end{equation}
Now, let us present the following lemmas.
\begin{lemma}\label{lemma for xN}
There exists a constant $C_0$ independent of $N$, such that
\begin{align*}
\sup_{1\leq i\leq N}\mathbb{E}\sup_{0\leq t\leq T}\Big|\breve{x}^{i}(t)\Big|^2\leq C_0.
\end{align*}
\end{lemma}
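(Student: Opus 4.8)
The plan is to establish the uniform (in $N$) bound on $\breve{x}^i$ by first controlling the control process $\varphi(\beta^i,\gamma^i)$ and the coupling term $\breve{x}^{(N)}$, then closing a Gronwall-type estimate. The key structural observation is that $\varphi(\beta^i,\gamma^i)=\mathbf{P}_\Gamma[R^{-1}(B^T\beta^i+D^T\gamma^i)]$, and since projections onto a convex set are nonexpansive (hence of at most linear growth once we account for $\mathbf{P}_\Gamma(0)$), we have the pointwise bound $|\varphi(\beta^i,\gamma^i)|\le C(1+|\beta^i|+|\gamma^i|)$ by \textbf{(H1)}, \textbf{(H2)}. Therefore the first step is to show that the limiting FBSDE \eqref{decentralized limiting state} admits a solution $(\alpha^i,\beta^i,\gamma^i)$ with an $N$-independent norm bound, namely $\mathbb{E}\sup_{0\le t\le T}|\alpha^i(t)|^2+\mathbb{E}\int_0^T(|\beta^i|^2+|\gamma^i|^2)\,dt\le C$. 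This follows from Theorem~1.1 (its well-posedness) applied to the representative agent: since all agents are statistically identical with the same deterministic initial datum $x$ and the same coefficients, the law of $(\alpha^i,\beta^i,\gamma^i)$ does not depend on $i$, so the bound is automatically uniform in both $i$ and $N$.

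Next I would handle the state-average $\breve{x}^{(N)}=\frac1N\sum_{j=1}^N\breve{x}^j$. The subtlety is that $\breve{x}^i$ depends on $\breve{x}^{(N)}$, which couples all agents, so the estimate cannot be run for a single $i$ in isolation. The standard device is to estimate the empirical average directly: averaging the $\breve{x}^i$-dynamics over $i$ gives an SDE for $\breve{x}^{(N)}$ driven by $\frac1N\sum_i[D\varphi(\beta^i,\gamma^i)+\sigma]\,dW_i$ and with drift involving $F\breve{x}^{(N)}$ itself. Using the independence of the Brownian motions $W_i$, the martingale part of $\breve{x}^{(N)}$ has quadratic variation of order $\frac1N\sum_i\mathbb{E}|\varphi(\beta^i,\gamma^i)|^2$, which is already controlled by the first step; a Gronwall argument then yields $\mathbb{E}\sup_{0\le t\le T}|\breve{x}^{(N)}(t)|^2\le C$ with $C$ independent of $N$.

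With both $\varphi(\beta^i,\gamma^i)$ and $\breve{x}^{(N)}$ under uniform control, the final step treats the $\breve{x}^i$-equation in \eqref{decentralized state} as a linear SDE with a forcing term $B\varphi(\beta^i,\gamma^i)+F\breve{x}^{(N)}+b$ in the drift and $D\varphi(\beta^i,\gamma^i)+\sigma$ in the diffusion. Applying It\^o's formula to $|\breve{x}^i|^2$, the Burkholder--Davis--Gundy inequality for the supremum of the stochastic integral, and standard SDE moment estimates under \textbf{(H1)}, one obtains
\begin{equation*}
\mathbb{E}\sup_{0\le t\le T}|\breve{x}^i(t)|^2\le C\Big(|x|^2+\mathbb{E}\int_0^T(|\varphi(\beta^i,\gamma^i)|^2+|\breve{x}^{(N)}|^2+1)\,dt\Big)\le C_0,
\end{equation*}
with $C_0$ independent of $i$ and $N$ by the preceding two steps; taking the supremum over $1\le i\le N$ gives the claim.

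The main obstacle I anticipate is the coupling through $\breve{x}^{(N)}$: because each $\breve{x}^i$ feeds back into the average, one cannot simply bound a single trajectory but must estimate the whole interacting system simultaneously. The cleanest resolution is to derive the bound for $\mathbb{E}\sup_t|\breve{x}^{(N)}|^2$ first (exploiting the mutual independence of the $W_i$ so the diffusion terms average with a $\frac1N$ gain), and only then return to the individual $\breve{x}^i$; the linear growth of the projection $\mathbf{P}_\Gamma$ is what keeps every constant $N$-independent.
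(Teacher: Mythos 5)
Your proposal is correct, and it reaches the bound through a slightly different resolution of the mean-field coupling than the paper does. Both arguments share the same first step: uniform (in $i$ and $N$) bounds on $(\alpha^i,\beta^i,\gamma^i)$ and on $\varphi(\beta^i,\gamma^i)$ from the well-posedness of the limiting FBSDE, the i.i.d. structure of the $W_i$, and the nonexpansiveness of $\mathbf{P}_\Gamma$; and both end with a Gronwall estimate on the individual state $\breve{x}^i$. The difference is in the middle. The paper never writes the dynamics of $\breve{x}^{(N)}$ in this lemma: it bounds each $\mathbb{E}\sup_s|\breve{x}^i(s)|^2$ by a Gronwall-type inequality containing $\frac1N\sum_j|\breve{x}^j|^2$, then \emph{sums over $i$} to obtain a closed inequality for $\sum_i|\breve{x}^i|^2$, deduces that this sum is $O(N)$ by Gronwall, and substitutes back to run Gronwall a second time for a single $i$. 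You instead derive the SDE satisfied by the empirical average $\breve{x}^{(N)}$ (by averaging the individual equations) and bound $\mathbb{E}\sup_t|\breve{x}^{(N)}(t)|^2$ directly, using the independence of the $W_i$ so that the martingale part of the average carries a $\frac1N$ gain. Both are valid; the paper's summation trick is more elementary (it needs no structure of the averaged dynamics, and independence of the Brownian motions enters only through the i.i.d. property of the FBSDE solutions), while your route front-loads the computation of the averaged SDE and its small martingale part --- which is precisely equation \eqref{ee1} and the estimate the paper performs anyway in Lemma \ref{lemma for xN and m} --- so it would streamline the subsequent proof that $\breve{x}^{(N)}-\mathbb{E}\alpha^i$ is $O(1/\sqrt{N})$. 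One cosmetic point: the well-posedness result you invoke is the paper's Theorem 2.1 (Section 2), and the uniform norm bound you extract from it is justified there by ``classical estimates of FBSDEs,'' exactly as you assume.
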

\begin{proof}
For each $1\leq i\leq N$, the monotonic fully coupled FBSDE
\[
\left\{
  \begin{aligned}
d\alpha^{i}&=\left[A\alpha^{i}\!+\!B\varphi(\beta^i,\gamma^i)\!+\!F\mathbb{E}\alpha^{i}\!+\!b\right]dt\!+\!
      \left[\!D\varphi(\beta^i,\gamma^i)\!+\!\sigma\right]dW_i(t),\\
      d\beta^{i}&=-\big[A^T\beta^{i}-Q(\alpha^{i}-\mathbb{E}\alpha^{i})\big]dt+\gamma^{i}dW_i(t),\\
      \alpha^{i}(0)&=x,\qquad {\beta}^{i}(T)=-G\big({\alpha}^{i}(T)-\mathbb{E}{\alpha}^{i}(T)\big),
    \end{aligned}
    \right.
\]
has a unique solution
$(\alpha^i,\beta^i,\gamma^i)\in L^{2}_{\mathbb{F}^i}(0,T;\mathbb{R}^n)\times L^{2}_{\mathbb{F}^i}(0,T;\mathbb{R}^n)
\times L^{2}_{\mathbb{F}^i}(0,T;\mathbb{R}^n)$. Thus, the system of all first equation of (\ref{decentralized state}), $1\leq i\leq N$,
has also a unique solution $(\breve{x}^i)_i\in (L^{2}_{\mathbb{F}^{W_1,\cdots,W_N}}(0,T;\mathbb{R}^n))^{\otimes N}$. Moreover, since $\{W_i\}_{i=1}^N$ is
$N$-dimensional Brownian motion whose components are independent and identically distributed, we have $(\alpha^i,\beta^i,\gamma^i), 1\leq i\leq N$
are independent identically distributed.

Noticing that $(\beta^i,\gamma^i)\in L^{2}_{\mathbb{F}^i}(0,T;\mathbb{R}^n)\times L^{2}_{\mathbb{F}^i}(0,T;\mathbb{R}^n)$,
with the Lipschitz property of the projection onto convex set,
it is not hard to show that $\varphi(\beta^i,\gamma^i):=\mathbf{P}_{\Gamma}\Big(R^{-1}
\big(B^T\beta^i+D^T\gamma^i\big)\Big)\in L^{2}_{\mathbb{F}^i}(0,T;\Gamma)$. From the above analysis and the classical estimates of FBSDEs, we have that there exists a constant $C_0$ independent of $N$ which may very line by line in the following, such that
\begin{equation*}
\mathbb{E}\sup_{0\leq t\leq T}(|\alpha^i(t)|^2+|\beta^i(t)|^2)
+\mathbb{E}\int_0^T(|\gamma^i(t)|^2+|\varphi(\beta^{i}(t),\gamma^{i}(t))|^2)dt\leq C_0.
\end{equation*}
Then from the first equation of (\ref{decentralized state}),  by using Burkholder-Davis-Gundy (BDG) inequality,
we have, for any $t\in[0,T]$,
\begin{equation}\label{ee8}
  \begin{aligned}
\mathbb{E}\sup_{0\leq s\leq t}|\breve{x}^{i}(s)|^2\leq & C_0+C_0\mathbb{E}\int_0^t\Big[|\breve{x}^{i}(s)|^2\!+\!|\breve{x}^{(N)}(s)|^2\Big]ds\\
\leq & C_0+C_0\mathbb{E}\int_0^t\Big[|\breve{x}^{i}(s)|^2\!+\!\frac{1}{N}\sum_{i=1}^N|\breve{x}^{i}(s)|^2\Big]ds.
\end{aligned}
\end{equation}
Thus
\[
\mathbb{E}\sup_{0\leq s\leq t}\sum_{i=1}^N|\breve{x}^{i}(s)|^2\leq \mathbb{E}\sum_{i=1}^N\sup_{0\leq s\leq t}|\breve{x}^{i}(s)|^2
\leq C_0N+2C_0\mathbb{E}\int_0^t\Big[\sum_{i=1}^N|\breve{x}^{i}(s)|^2\Big]ds.
\]
By Gronwall's inequality, it is easy to obtain
\[
\mathbb{E}\sup_{0\leq s\leq t}\sum_{i=1}^N|\breve{x}^{i}(s)|^2=O(N), \text{ for any } 1\leq i\leq N.
\]
Finally, substituting this estimate to (\ref{ee8}) and using Gronwall's inequality once again, we have
\[
\mathbb{E}\sup_{0\leq t\leq T}\Big|\breve{x}^{i}(t)\Big|^2\leq C_0, \text{ for any } 1\leq i\leq N,
\]
which completes the proof.
\end{proof}
\begin{lemma}\label{lemma for xN and m}
\begin{align}\label{estimate for xN and m}
\mathbb{E}\sup_{0\leq t\leq T}\Big|\breve{x}^{(N)}(t)-\mathbb{E}\alpha^{i}(t)\Big|^2=O\Big(\frac{1}{N}\Big).
\end{align}
\end{lemma}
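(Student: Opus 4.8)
The plan is to introduce the empirical average of the limiting states $\bar\alpha^{(N)}:=\frac1N\sum_{j=1}^N\alpha^j$ and to control separately the two pieces $\breve{x}^{(N)}-\bar\alpha^{(N)}$ and $\bar\alpha^{(N)}-\mathbb{E}\alpha^i$, exploiting $|\breve{x}^{(N)}-\mathbb{E}\alpha^i|^2\le 2|\breve{x}^{(N)}-\bar\alpha^{(N)}|^2+2|\bar\alpha^{(N)}-\mathbb{E}\alpha^i|^2$. Since the $\{W_j\}$ are i.i.d. and all agents share the same coefficients, Lemma \ref{lemma for xN} already supplies that the triples $(\alpha^j,\beta^j,\gamma^j)$ are i.i.d. together with the uniform bound $\mathbb{E}\sup_t|\alpha^j|^2+\mathbb{E}\int_0^T|\varphi(\beta^j,\gamma^j)|^2dt\le C_0$; in particular $m:=\mathbb{E}\alpha^j$ is a deterministic function independent of $j$, so $\mathbb{E}\alpha^i=m$ for every $i$, and $\bar\alpha^{(N)}(0)=m(0)=\breve{x}^{(N)}(0)=x$.

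The first and main step is a quantitative law of large numbers: I claim $\mathbb{E}\sup_{0\le t\le T}|\bar\alpha^{(N)}(t)-m(t)|^2=O(1/N)$. Writing $\varphi^j:=\varphi(\beta^j,\gamma^j)$ and $\bar\varphi^{(N)}:=\frac1N\sum_j\varphi^j$, and taking expectations in the $\alpha^j$-equation to see that $m$ solves $\dot m=(A+F)m+B\mathbb{E}\varphi^1+b$, the process $\eta:=\bar\alpha^{(N)}-m$ solves $d\eta=[A\eta+B(\bar\varphi^{(N)}-\mathbb{E}\varphi^1)]dt+\frac1N\sum_j[D\varphi^j+\sigma]dW_j$ with $\eta(0)=0$. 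For the drift fluctuation, independence kills the cross terms, giving $\mathbb{E}\int_0^T|\bar\varphi^{(N)}-\mathbb{E}\varphi^1|^2ds\le\frac1N\mathbb{E}\int_0^T|\varphi^1|^2ds=O(1/N)$. For the martingale part, the $W_j$ are independent, so its quadratic variation is $\frac1{N^2}\sum_j\int_0^T|D\varphi^j+\sigma|^2ds$; the Burkholder--Davis--Gundy inequality together with the uniform bound gives an $O(1/N)$ estimate for its running supremum. Gronwall's inequality then closes the estimate for $\mathbb{E}\sup_t|\eta(t)|^2$.

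The second step is an exact cancellation that reduces $\breve{x}^{(N)}-\bar\alpha^{(N)}$ to $\eta$. Averaging the $\breve{x}^j$-dynamics gives $d\breve{x}^{(N)}=[A\breve{x}^{(N)}+B\bar\varphi^{(N)}+F\breve{x}^{(N)}+b]dt+\frac1N\sum_j[D\varphi^j+\sigma]dW_j$, while $d\bar\alpha^{(N)}=[A\bar\alpha^{(N)}+B\bar\varphi^{(N)}+Fm+b]dt+\frac1N\sum_j[D\varphi^j+\sigma]dW_j$. The crucial observation is that the common input $B\bar\varphi^{(N)}$ and the entire diffusion term coincide in the two equations, so $\zeta:=\breve{x}^{(N)}-\bar\alpha^{(N)}$ has no martingale part and solves the pathwise linear ODE $\dot\zeta=(A+F)\zeta+F\eta$ with $\zeta(0)=0$, using $F\breve{x}^{(N)}-Fm=F\zeta+F\eta$. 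Gronwall's inequality yields $\sup_t|\zeta(t)|^2\le C\int_0^T|\eta(s)|^2ds$, hence $\mathbb{E}\sup_t|\zeta|^2\le C\,\mathbb{E}\sup_t|\eta|^2=O(1/N)$. Combining the two steps through the triangle inequality gives \eqref{estimate for xN and m}.

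I expect the genuine obstacle to be the first step: the $O(1/N)$ rate rests entirely on the mutual independence of the noises $W_j$ (hence of the triples $(\alpha^j,\beta^j,\gamma^j)$), which is precisely what forces the cross terms in both the drift fluctuation and the martingale quadratic variation to vanish. By contrast, the cancellation in the second step is purely algebraic, and once it is noticed the remaining Gronwall bound is routine.
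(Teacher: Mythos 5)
Your proof is correct, and it differs from the paper's only in how the estimate is organized, not in its probabilistic substance. The paper works directly with $\Delta(t):=\breve{x}^{(N)}(t)-\mathbb{E}\alpha^{i}(t)$: it averages the first equation of (\ref{decentralized state}) to get (\ref{ee1}), takes expectations to get (\ref{ee2}), writes a single SDE for $\Delta$ whose drift contains the fluctuation $\frac{1}{N}\sum_{i}B\varphi(\beta^i,\gamma^i)-B\mathbb{E}\varphi(\beta^i,\gamma^i)$ and whose diffusion is $\frac{1}{N}\sum_{i}\big[D\varphi(\beta^i,\gamma^i)+\sigma\big]dW_i$, and then runs one Cauchy--Schwarz/BDG/Gronwall pass. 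You instead interpose the empirical average $\bar\alpha^{(N)}$ of the limiting states and split $\Delta$ into $\zeta=\breve{x}^{(N)}-\bar\alpha^{(N)}$ and $\eta=\bar\alpha^{(N)}-\mathbb{E}\alpha^i$. The core estimates are then literally the same as the paper's, merely performed on $\eta$ rather than on $\Delta$: independence of the $(\beta^j,\gamma^j)$ kills the cross terms in the drift fluctuation to give $O(1/N)$, identical distribution plus the uniform bound from Lemma \ref{lemma for xN} gives $O(1/N)$ for the martingale via BDG, and Gronwall closes. What your decomposition buys is the clean algebraic observation that the common input $B\bar\varphi^{(N)}$ and the entire diffusion cancel exactly in $\zeta$, leaving a pathwise linear equation driven by $\eta$, so the law-of-large-numbers content (step 1) is cleanly separated from deterministic propagation (step 2); what the paper's direct route buys is brevity, since it never needs $\bar\alpha^{(N)}$ nor a second Gronwall argument. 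Both arguments are valid and yield the same $O(1/N)$ rate from the same source.
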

\begin{proof}
On one hand, let us add up both sides of the first equation of (\ref{decentralized state})
with respect to all $1\leq i\leq N$ and multiply $\frac{1}{N}$, we obtain (recall that $\breve{x}^{(N)}=\frac{1}{N}\sum_{i=1}^{N}\breve{x}^{i}$)
\begin{equation}\label{ee1}
\left\{
\begin{aligned}
d\breve{x}^{(N)}&=\left[A\breve{x}^{(N)}\!+\!\frac{1}{N}\sum_{i=1}^{N}B\varphi(\beta^i,\gamma^i)\!+\!F\breve{x}^{(N)}\!+\!b\right]dt\\
&\qquad\!+\!\frac{1}{N}\sum_{i=1}^{N}\left[\!D\varphi(\beta^i,\gamma^i)
\!+\!\sigma\right]dW_i(t),\\
\breve{x}^{(N)}(0)&=x.
\end{aligned}
\right.
\end{equation}
On the other hand, by taking the expectation on both sides of the second equation of (\ref{decentralized state}),
it follows from Fubini's theorem that $\mathbb{E}\alpha^i$ satisfies the following equation:
\begin{equation}\label{ee2}
\left\{
\begin{aligned}
d(\mathbb{E}\alpha^{i})&=\left[A\mathbb{E}\alpha^{i}+\mathbb{E}\left(B\varphi(\beta^i,\gamma^i)\right)+F\mathbb{E}\alpha^{i}+b\right]dt,\\
\mathbb{E}{\alpha}^{i}(0)&=x.
\end{aligned}
\right.
\end{equation}
From (\ref{ee1}) and  (\ref{ee2}), by denoting  $\Delta(t):=\breve{x}^{(N)}(t)-\mathbb{E}\alpha^{i}(t)$, we have
\begin{equation*}
\left\{
\begin{aligned}
d\Delta&=\Bigg[A\Delta+\frac{1}{N}\sum_{i=1}^{N}B\varphi(\beta^i,\gamma^i)-B\mathbb{E}\varphi(\beta^i,\gamma^i)+F\Delta\Bigg]dt\\
&\quad+\frac{1}{N}\sum_{i=1}^{N}\Bigg[D\varphi(\beta^i,\gamma^i)
+\sigma\Bigg]dW_i(t),\\
\Delta(0)&=0,
\end{aligned}
\right.
\end{equation*}
and the inequality $(x+y)^2\leq 2x^2+2y^2$ yields that,  for any $t\in[0,T]$,
\[
\begin{aligned}
\mathbb{E}\sup_{0\leq s\leq t}|\Delta(s)|^2&\leq 2\mathbb{E}\sup_{0\leq s\leq t}\Bigg|\int_0^s\Big[(A\!+\!F)\Delta(r)
\!+\!\frac{1}{N}\sum_{i=1}^{N}B\varphi(\beta^i(r),\gamma^i(r))
\!-\!B\mathbb{E}\varphi(\beta^i(r),\gamma^i(r))\Big]dr\Bigg|^2\\
&+2\mathbb{E}\sup_{0\leq s\leq t}\Bigg|\frac{1}{N}\sum_{i=1}^{N}\int_0^s\Big[D\varphi(\beta^i(r),\gamma^i(r))+\sigma(r)\Big]dW_i(r)\Bigg|^2.
\end{aligned}
\]
From the Cauchy-Schwartz inequality and the BDG inequality, we obtain that there exists a constant $C_0$ independent of $N$ (which may vary line by line)
such that
\begin{equation}\label{ee3}
\begin{aligned}
&\mathbb{E}\sup_{0\leq s\leq t}|\Delta(t)|^2\leq C_0\mathbb{E}\int_0^t\Big[|\Delta(s)|^2
\!+\!\left|\frac{1}{N}\sum_{i=1}^{N}\varphi(\beta^i(s),\gamma^i(s))
\!-\!\mathbb{E}\varphi(\beta^i(s),\gamma^i(s))\right|^2\Big]ds\\
&\qquad+\frac{C_0}{N^2}\mathbb{E}\Bigg(\sum_{i=1}^{N}\int_0^t\left|\!D\varphi(\beta^i(s),\gamma^i(s))+\!\sigma(s)\right|^2ds\Bigg).
\end{aligned}
\end{equation}
Since $(\beta^i,\gamma^i), 1\leq i\leq N$ are independent identically distributed, for each fixed $s\in[0,T]$, let us denote that $\mu(s)=\mathbb{E}\varphi(\beta^i(s),\gamma^i(s))$ (note that $\mu$ does not depend on $i$), we have
\[
\begin{aligned}
&\mathbb{E}\left|\frac{1}{N}\sum_{i=1}^{N}\varphi(\beta^i(s),\gamma^i(s))-\mu(s)\right|^2
=\frac{1}{N^2}\mathbb{E}\left|\sum_{i=1}^{N}\left[\varphi(\beta^i(s),\gamma^i(s))-\mu(s)\right]\right|^2\\
=&\frac{1}{N^2}\mathbb{E}\sum_{i=1}^{N}\left|\varphi(\beta^i(s),\gamma^i(s))\!-\!\mu(s)\right|^2\\
&\!+\!\frac{1}{N^2}\mathbb{E}\sum_{ i=1,j=1,j\neq i,}^{N}\left\langle\varphi(\beta^i(s),\gamma^i(s))\!-\!\mu(s),\varphi(\beta^j(s),\gamma^j(s))\!-\!\mu(s)\right\rangle.
\end{aligned}
\]
Since $(\beta^i,\gamma^i), 1\leq i\leq N$ are independent, we have
\[
\begin{aligned}
&\frac{1}{N^2}\mathbb{E}\sum_{ i=1,j=1,j\neq i,}^{N}\left\langle\varphi(\beta^i(s),\gamma^i(s))\!-\!\mu(s),\varphi(\beta^j(s),\gamma^j(s))\!-\!\mu(s)\right\rangle\\
=&\frac{1}{N^2}\sum_{ i=1,j=1,j\neq i,}^{N}\left\langle\mathbb{E}\varphi(\beta^i(s),\gamma^i(s))\!-\!\mu(s),
\mathbb{E}\varphi(\beta^j(s),\gamma^j(s))\!-\!\mu(s)\right\rangle=0.
\end{aligned}
\]
Then, due to the fact that $(\beta^i,\gamma^i), 1\leq i\leq N$ are identically distributed, we can obtain that there exists a constant $C_0$ independent of $N$ such that
\[
\begin{aligned}
&\int_0^t\mathbb{E}\left|\frac{1}{N}\sum_{i=1}^{N}
B\varphi(\beta^i(s),\gamma^i(s))-B\mathbb{E}\varphi(\beta^i(s),\gamma^i(s))\right|^2ds\\
\leq & C_0\int_0^t\mathbb{E}\left|\frac{1}{N}\sum_{i=1}^{N}\varphi(\beta^i(s),\gamma^i(s))-\mu(s)\right|^2ds
= \frac{C_0}{N^2}\int_0^t\mathbb{E}\sum_{i=1}^{N}\left|\varphi(\beta^i(s),\gamma^i(s))\!-\!\mu(s)\right|^2ds\\
=& \frac{C_0}{N}\int_0^t\mathbb{E}\left|\varphi(\beta^i(s),\gamma^i(s))\!-\!\mu(s)\right|^2ds=O\Big(\frac{1}{N}\Big),
\end{aligned}
\]
where the last equality comes from the fact that $\varphi(\beta^i,\gamma^i)\in L^{2}_{\mathcal{F}^i}(0,T;\Gamma)$.

Let us now estimate the second term of (\ref{ee3}), using the fact that $(\alpha^i,\beta^i,\gamma^i)$ are  identically distributed, we have
$$
\frac{C_0}{N^2}\mathbb{E}\Bigg(\sum_{i=1}^{N}\int_0^t
\left|\!D\varphi(\beta^i(s),\gamma^i(s))\!+\!\sigma(s)\right|^2ds\Bigg)
=O\Big(\frac{1}{N}\Big).$$

Therefore, from the above analysis, we get from (\ref{ee3}) that
\[
\mathbb{E}\sup_{0\leq s\leq t}|\Delta(s)|^2\leq C_0\mathbb{E}\int_0^t|\Delta(s)|^2+O\Big(\frac{1}{N}\Big),  \text{ for any } t\in[0,T].
\]
Finally, by using Gronwall's inequality, we complete the proof.
\end{proof}

\begin{lemma}\label{lemma for x and xi}
\begin{align}\label{estimate for x and xi}
\sup_{1 \leq i \leq N}\mathbb{E}\sup_{0\leq t\leq T}\Big|\breve{x}^{i}(t)-\alpha^{i}(t)\Big|^2=O\Big(\frac{1}{N}\Big).
\end{align}
\end{lemma}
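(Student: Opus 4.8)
The plan is to exploit the fact that the decentralized state $\breve{x}^{i}$ and the limiting state $\alpha^{i}$ are driven by \emph{exactly the same} control process $\varphi(\beta^{i},\gamma^{i})$ and the same diffusion coefficient. Setting $Y^{i}:=\breve{x}^{i}-\alpha^{i}$ and subtracting the first equation of \eqref{decentralized limiting state} from the first equation of \eqref{decentralized state}, the terms $B\varphi(\beta^{i},\gamma^{i})$, $D\varphi(\beta^{i},\gamma^{i})$, $b$ and $\sigma$ cancel completely, so the stochastic integral disappears and $Y^{i}$ solves the pathwise linear equation
\begin{equation*}
dY^{i}(t)=\big[A(t)Y^{i}(t)+F(t)\big(\breve{x}^{(N)}(t)-\mathbb{E}\alpha^{i}(t)\big)\big]dt,\qquad Y^{i}(0)=0.
\end{equation*}
The inhomogeneous forcing is precisely $F\Delta$, where $\Delta(t)=\breve{x}^{(N)}(t)-\mathbb{E}\alpha^{i}(t)$ is the quantity already controlled in Lemma \ref{lemma for xN and m}; note that $\mathbb{E}\alpha^{i}$ does not depend on $i$ since the $(\alpha^{i},\beta^{i},\gamma^{i})$ are identically distributed, so $\Delta$ is itself independent of $i$.

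Next I would derive a pointwise bound. Writing $Y^{i}(t)=\int_{0}^{t}\big[A Y^{i}+F\Delta\big]\,ds$, the elementary inequality $(a+b)^{2}\le 2a^{2}+2b^{2}$ together with the Cauchy--Schwarz inequality and the uniform bounds on $A,F$ from \textbf{(H1)} give, for every $t\in[0,T]$,
\begin{equation*}
\sup_{0\le s\le t}|Y^{i}(s)|^{2}\le C_{0}\int_{0}^{t}|Y^{i}(s)|^{2}\,ds+C_{0}\int_{0}^{T}|\Delta(s)|^{2}\,ds,
\end{equation*}
with $C_{0}$ depending only on $T$ and the $L^{\infty}$ norms of $A,F$, hence independent of $i$ and $N$. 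Taking expectations and invoking Lemma \ref{lemma for xN and m}, the last term is $O(1/N)$ uniformly in $i$, so that
\begin{equation*}
\mathbb{E}\sup_{0\le s\le t}|Y^{i}(s)|^{2}\le C_{0}\int_{0}^{t}\mathbb{E}\sup_{0\le r\le s}|Y^{i}(r)|^{2}\,ds+O\Big(\tfrac{1}{N}\Big).
\end{equation*}

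Finally I would close the estimate with Gronwall's inequality, which yields $\mathbb{E}\sup_{0\le t\le T}|Y^{i}(t)|^{2}=O(1/N)$ with a constant independent of $i$; taking the supremum over $1\le i\le N$ gives \eqref{estimate for x and xi}. There is no serious analytic obstacle here: the decisive simplification is the cancellation of the diffusion terms, which removes the need for any BDG estimate and reduces the whole problem to a deterministic-type Gronwall argument fed by the $O(1/N)$ bound of Lemma \ref{lemma for xN and m}. The only point requiring a little care is to keep every constant uniform in $i$, which is automatic since the $(\alpha^{i},\beta^{i},\gamma^{i})$ are identically distributed and the estimate on $\Delta$ does not depend on $i$.
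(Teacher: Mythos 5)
Your proposal is correct and follows essentially the same route as the paper: subtracting \eqref{decentralized limiting state} from \eqref{decentralized state} so that the control, drift-constant and diffusion terms cancel, leaving the deterministic-type linear equation $d(\breve{x}^{i}-\alpha^{i})=\big[A(\breve{x}^{i}-\alpha^{i})+F(\breve{x}^{(N)}-\mathbb{E}\alpha^{i})\big]dt$, and then closing via the classical SDE/Gronwall estimate fed by Lemma \ref{lemma for xN and m}. The only difference is that you spell out explicitly the Gronwall argument that the paper compresses into the phrase ``the classical estimate for the SDE yields.''
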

\begin{proof}
From (\ref{decentralized state}) and  (\ref{decentralized limiting state}), we have that
\begin{equation}\label{ee4}
\left\{
        \begin{aligned}d\breve{x}^{i}&=\left[A\breve{x}^{i}\!+\!B\varphi(\beta^i,\gamma^i)
        \!+\!F\breve{x}^{(N)}+b\right]dt
        \!+\!\left[D\varphi(\beta^i,\gamma^i)
      \!+\!\sigma\right]dW_i(t),\\
   d\alpha^{i}&=\left[A\alpha^{i}\!+\!B\varphi(\beta^i,\gamma^i)\!+\!F\mathbb{E}\alpha^{i}\!+\!b\right]dt
   \!+\!\left[\!D\varphi(\beta^i,\gamma^i)
      \!+\!\sigma\right]dW_i(t),\\
      \breve{x}^{i}(0)&=\alpha^{i}(0)=x,
    \end{aligned}
    \right.
\end{equation}
where $(\alpha^i,\beta^i,\gamma^i)$ is the unique solution to the following FBSDE:
\[
\left\{
 \begin{aligned}
   d\alpha^{i}&=\left[A\alpha^{i}\!+\!B\varphi(\beta^i,\gamma^i)\!+\!F\mathbb{E}\alpha^{i}\!+\!b\right]dt\!+\!
      \left[\!D\varphi(\beta^i,\gamma^i)\!+\!\sigma\right]dW_i(t),\\
      d\beta^{i}&=-\big[A^T\beta^{i}-Q(\alpha^{i}-\mathbb{E}\alpha^{i})\big]dt+\gamma^{i}dW_i(t),\\
   \alpha^{i}(0)&=x,\qquad \beta^{i}(T)=-G\big(\alpha^{i}_T-\mathbb{E}{\alpha}^{i}_T\big).
    \end{aligned}
    \right.
\]
From (\ref{ee4}), we have
\begin{equation*}\label{ee5}
\left\{
\begin{aligned}
d(\breve{x}^{i}-\alpha^{i})&=\Big[A(\breve{x}^{i}\!-\!\alpha^{i})
\!+\!F(\breve{x}^{(N)}\!-\!\mathbb{E}\alpha^{i})\Big]dt, \\
\breve{x}^{i}(0)-\bar{x}^{i}(0)&=0.
\end{aligned}
\right.
\end{equation*}
The classical estimate for the SDE yields that
\[
\mathbb{E}\sup_{0\leq t\leq T}\Big|\breve{x}^{i}(t)-\alpha^{i}(t)\Big|^2\leq
C_0\mathbb{E}\int_0^T\left|\breve{x}^{(N)}(s)-\mathbb{E}\alpha^{i}(s)\right|^2ds,
\]
where $C_0$ is a constant independent of $N$. Noticing (\ref{estimate for xN and m}) of Lemma \ref{lemma for xN and m},
we obtain (\ref{estimate for x and xi}). The proof is completed.
\end{proof}
\begin{lemma}\label{first lemma for cost}
For all $ 1\leq i\leq N$, we have
\begin{equation*}
 \Big|\mathcal{J}_i(\bar{u}_i, \bar{u}_{-i})-J_i(\bar{u}_i)\Big|=O\Big(\frac{1}{\sqrt{N}}\Big).
\end{equation*}
\end{lemma}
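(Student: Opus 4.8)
The plan is to exploit the fact that both cost functionals are driven by the \emph{same} control $\bar u_i=\varphi(\beta^i,\gamma^i)$, so that the running control penalty $\tfrac12\mathbb{E}\int_0^T\langle R\bar u_i,\bar u_i\rangle\,dt$ occurs identically in $\mathcal{J}_i(\bar u_i,\bar u_{-i})$ and in $J_i(\bar u_i)$ and cancels exactly in the difference. What then remains is to control the discrepancy of the two quadratic state-tracking pieces — the running term weighted by $Q$ and the terminal term weighted by $G$ — where the tracking error entering $\mathcal{J}_i$ is $\breve x^i-\breve x^{(N)}$ while the one entering $J_i$ is $\alpha^i-\mathbb{E}\alpha^i$ (recall $z=\mathbb{E}\alpha^i$).

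First I would set $a:=\breve x^i-\breve x^{(N)}$ and $b:=\alpha^i-\mathbb{E}\alpha^i$ and decompose the error as
\[
a-b=\big(\breve x^i-\alpha^i\big)-\big(\breve x^{(N)}-\mathbb{E}\alpha^i\big),
\]
so that Lemma \ref{lemma for x and xi} and Lemma \ref{lemma for xN and m} together give $\mathbb{E}\sup_{0\le t\le T}|a(t)-b(t)|^2=O(1/N)$. Since $Q$ and $G$ are symmetric, the polarization identity yields
\[
\langle Q a,a\rangle-\langle Q b,b\rangle=\langle Q(a-b),a+b\rangle,
\]
and analogously for the terminal term with $G$. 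Applying Cauchy--Schwarz to the expectation of each such product splits it into one factor $(\mathbb{E}|a-b|^2)^{1/2}=O(1/\sqrt N)$ and a second factor $(\mathbb{E}|a+b|^2)^{1/2}$.

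Next I would bound the second factor uniformly in $N$: Lemma \ref{lemma for xN} gives $\sup_i\mathbb{E}\sup_t|\breve x^i|^2\le C_0$, which controls $\breve x^{(N)}$ as well, while the classical FBSDE estimates recalled in the proof of Lemma \ref{lemma for xN} bound $\mathbb{E}\sup_t|\alpha^i|^2$ and $\sup_t|\mathbb{E}\alpha^i|^2$; hence $(\mathbb{E}|a+b|^2)^{1/2}\le C_0$ with $C_0$ independent of $N$. Multiplying the two factors, and integrating the running term over the finite interval $[0,T]$, shows that both the $Q$-contribution and the $G$-contribution are $O(1/\sqrt N)$, which is the asserted rate. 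The only delicate point is the bookkeeping in the Cauchy--Schwarz splitting: the square-root loss must be isolated entirely in the $a-b$ factor, where the two preceding lemmas supply the $O(1/N)$ bound, while $a+b$ contributes only an $O(1)$ factor. This also explains why the rate is genuinely $O(1/\sqrt N)$ rather than $O(1/N)$ — the product of an $O(1/\sqrt N)$ quantity with a non-vanishing $O(1)$ quantity cannot be improved — so there is no deep obstacle, merely the need to organize the estimate so that no spurious loss of rate is introduced.
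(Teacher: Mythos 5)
Your proposal is correct and follows essentially the same route as the paper: the $R$-term cancels, the difference of quadratic forms is split into a term quadratic in $(\breve x^i-\breve x^{(N)})-(\alpha^i-\mathbb{E}\alpha^i)$ plus a cross term, and Lemmas \ref{lemma for xN and m}, \ref{lemma for x and xi} together with the uniform bounds from Lemma \ref{lemma for xN} give $O(1/N)$ and, after Cauchy--Schwarz, $O(1/\sqrt{N})$. Your identity $\langle Qa,a\rangle-\langle Qb,b\rangle=\langle Q(a-b),a+b\rangle$ is just the compact form of the paper's expansion $\langle Q(a-b),a-b\rangle+2\langle Q(a-b),b\rangle$, so the two arguments coincide.
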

\begin{proof}
Recall (\ref{original cost}), (\ref{limit cost}) and (\ref{limit average process}), we have
\[
\begin{aligned}
    \mathcal {J}_i(\bar{u}_i,\bar{u}_{-i})&=\frac{1}{2}\mathbb{E}
    \bigg [ \int_0^T \Big <Q(t)\big(\breve{x}^i(t)-\breve{x}^{(N)}(t)\big),\breve{x}^i(t)-\breve{x}^{(N)}(t)\Big>\\
    &+\big<R(t)\bar{u}_i(t),\bar{u}_i(t)\big>dt+\Big<G\big(\breve{x}^i(T)-\breve{x}^{(N)}(T)\big),\breve{x}^i(T)-\breve{x}^{(N)}(T)\Big>\bigg]
\end{aligned}
\]
and
\[
\begin{aligned}
    {J}_i(\bar{u}_i)&=\frac{1}{2}\mathbb{E}
    \bigg [ \int_0^T \Big <Q(t)\big(\alpha^i(t)-\mathbb{E}\alpha^i(t)\big),\alpha^i(t)-\mathbb{E}\alpha^i(t)\Big>dt\\
    &+\big<R(t)\bar{u}_i(t),\bar{u}_i(t)\big>dt+\Big<G\big(\alpha^i(T)-\mathbb{E}\alpha^i(T)\big),\alpha^i(T)-\mathbb{E}\alpha^i(T)\Big>\bigg],
\end{aligned}
\]
then
\begin{equation}\label{ee7}
\begin{aligned}
&\mathcal{J}_i(\bar{u}_i, \bar{u}_{-i})-J_i(\bar{u}_i)\\
=&\frac{1}{2}\mathbb{E}\bigg[\int_0^T\left( \Big <Q(t)\big(\breve{x}^i(t)\!-\!\breve{x}^{(N)}(t)\big),\breve{x}^i(t)\!-\!\breve{x}^{(N)}(t)\Big>
\!-\!\Big <Q(t)\big(\alpha^i(t)\!-\!\mathbb{E}\alpha^i(t)\big),\alpha^i(t)\!-\!\mathbb{E}\alpha^i(t)\Big>\right)dt\\
&\!+\!\Big<G\big(\breve{x}^i(T)\!-\!\breve{x}^{(N)}(T)\big),\breve{x}^i(T)\!-\!\breve{x}^{(N)}(T)\Big>
\!-\!\Big<G\big(\alpha^i(T)\!-\!\mathbb{E}\alpha^i(T)\big),\alpha^i(T)\!-\!\mathbb{E}\alpha^i(T)\Big>\bigg].
\end{aligned}
\end{equation}
From
\[
\begin{aligned}
&\langle Q(a-b), a-b\rangle-\langle Q(c-d), c-d\rangle\\
=&\langle Q(a-b-(c-d)), a-b-(c-d)\rangle+2\langle Q(a-b-(c-d)), c-d\rangle,
\end{aligned}
\]
and Lemma \ref{lemma for xN and m}, Lemma \ref{lemma for x and xi} as well as $\mathbb{E}\sup_{0\leq t\leq T}\left|\alpha^i(t)\right|^2\leq C_0$,
for some constant $C_0$ independent of $N$ which may vary line by line in the following,  we have
\[
\begin{aligned}
&\left|\mathbb{E}\bigg [\int_0^T\left( \Big< Q(t)\big(\breve{x}^i(t)\!-\!\breve{x}^{(N)}(t)\big),\breve{x}^i(t)\!-\!\breve{x}^{(N)}(t)\Big>
\!-\!\Big< Q(t)\big(\alpha^i(t)\!-\!\mathbb{E}\alpha^i(t)\big),\alpha^i(t)\!-\!\mathbb{E}\alpha^i(t)\Big>\right)dt\right|\\
\leq& C_0\int_0^T\mathbb{E}\left|\breve{x}^i(t)-\breve{x}^{(N)}(t)-(\alpha^i(t)-\mathbb{E}\alpha^i(t))\right|^2dt\\
&\qquad\qquad+C_0\int_0^T\mathbb{E}\left[
\left|\breve{x}^i(t)-\breve{x}^{(N)}(t)-(\alpha^i(t)-\mathbb{E}\alpha^i(t))\right|\cdot
\left|\alpha^i(t)-\mathbb{E}\alpha^i(t)\right|\right]dt\\
\leq &C_0\int_0^T\mathbb{E}\left|\breve{x}^i(t)-\alpha^i(t)\right|^2dt
+C_0\int_0^T\mathbb{E}\left|\breve{x}^{(N)}(t)-\mathbb{E}\alpha^i(t)\right|^2dt\\
&\qquad\qquad+C_0\int_0^T\left(\mathbb{E}\left|\breve{x}^i(t)-\breve{x}^{(N)}(t)
-(\alpha^i(t)-\mathbb{E}\alpha^i(t))\right|^2\right)^{\frac{1}{2}}
\left(\mathbb{E}\left|\alpha^i(t)-\mathbb{E}\alpha^i(t)\right|^2\right)^{\frac{1}{2}}dt\\
\leq &C_0\int_0^T\mathbb{E}\left|\breve{x}^i(t)-\alpha^i(t)\right|^2dt
+C_0\int_0^T\mathbb{E}\left|\breve{x}^{(N)}(t)-\mathbb{E}\alpha^i(t)\right|^2dt\\
&\qquad\qquad+C_0\int_0^T\left(\mathbb{E}\left|\breve{x}^i(t)-\alpha^i(t)\right|^2
+\mathbb{E}\left|\breve{x}^{(N)}(t)-\mathbb{E}\alpha^i(t)\right|^2\right)^{\frac{1}{2}}
dt\\
=& O\left(\frac{1}{\sqrt{N}}\right).
\end{aligned}
\]
With similar argument, we can show that
\[
\begin{aligned}
\left|\mathbb{E}\bigg [\Big< G\big(\breve{x}^i(T)\!-\!\breve{x}^{(N)}(T)\big),\breve{x}^i(T)\!-\!\breve{x}^{(N)}(T)\Big>
\!-\!\Big< G\big(\alpha^i(T)\!-\!\mathbb{E}\alpha^i(T)\big),\alpha^i(T)\!-\!\mathbb{E}\alpha^i(T)\Big>\bigg]\right|
=O\left(\frac{1}{\sqrt{N}}\right).
\end{aligned}
\]
The proof is completed by noticing (\ref{ee7}).
\end{proof}

\medskip

Our remaining analysis is to prove the control strategies set $(\bar{u}_1,\bar{u}_2,\ldots,\bar{u}_N)$ is an $\epsilon$-Nash equilibrium for \textbf{ Problem (CC)}. For any fixed $i$, $1\leq i \leq N$,
we consider the perturbation control $u_i \in \mathcal{U}_{ad}^{d,i}$ and we have the following state dynamics ($j\neq i$):
\begin{equation}\label{ee9}
\left\{
        \begin{aligned}dy^{i}&=\left[Ay^{i}+Bu_i+Fy^{(N)}+b\right]dt
        +\left[Du_{i}+\sigma\right]dW_i(t),\\
      dy^{j}&=\left[Ay^{j}+B\varphi(\beta^j,\gamma^j)
      +Fy^{(N)}+b\right]dt+\left[D\varphi(\beta^j,\gamma^j)
      +\sigma\right]dW_j(t),\\
      d\alpha^{j}&=\left[A\alpha^{j}+B\varphi(\beta^j,\gamma^j)
      +F\mathbb{E}\alpha^{j}+b\right]dt+\left[D\varphi(\beta^j,\gamma^j)+\sigma \right]dW_j(t),\\
      d\beta^{j}&=-\big[A^T\beta^{j}-Q(\alpha^{j}-\mathbb{E}\alpha^{j})\big]dt+\gamma^{j}dW_j(t),\\
       y^{i}(0)&=y^{j}(0)=\alpha^{j}(0)=x,\qquad \beta^{j}(T)=-G\big(\alpha^{j}(T)-\mathbb{E}\alpha^{j}(T)\big),
    \end{aligned}
    \right.
\end{equation}
where $y^{(N)}=\frac{1}{N}\sum_{i=1}^{N}y^{i}$. The wellposedness of above system is easily to obtain.
To prove $(\bar{u}_1,\bar{u}_2,\ldots,\bar{u}_N)$ is an $\epsilon$-Nash equilibrium, we need to show that for $ 1\leq i\leq N$,
\[
\inf_{u_i\in\mathcal{U}_{ad}^i}\mathcal{J}_i(u_i,\bar{u}_{-i})\ge\mathcal{J}_i(\bar{u}_i,\bar{u}_{-i})-\epsilon.
\]
Then we only need to consider the perturbation $u_i\in\mathcal{U}_{ad}^{d,i}$
such that $\mathcal{J}_i(u_i,\bar{u}_{-i})\leq\mathcal{J}_i(\bar{u}_i,\bar{u}_{-i})$. Thus we have
\[
\mathbb{E}\int_0^T\langle Ru_i(t),u_i(t)\rangle dt\leq
\mathcal{J}_i(u_i,\bar{u}_{-i})\leq\mathcal{J}_i(\bar{u}_i,\bar{u}_{-i})\leq J_i(\bar{u}_i)+O\Big(\frac{1}{\sqrt{N}}\Big),
\]
which implies that
\begin{equation}\label{boundness of control}
\mathbb{E}\int_0^T|u_i(t)|^2dt\leq C_0,
\end{equation}
where $C_0$ is a constant independent of $N$. Then similar to Lemma \ref{lemma for xN},
we can show that there exists a constant $C_0$ independent of $N$ such that
\begin{equation}\label{boundedness of yi}
\sup_{1\leq i\leq N}\mathbb{E}\sup_{0\leq t\leq T}|y^i(t)|^2\leq C_0.
\end{equation}

\medskip

Now, for the $i^{th}$ agent, we consider the perturbation in the \textbf{Problem (LCC)}.
We introduce the following system of the decentralized limiting state  with perturbation control  ($j\neq i$):
\begin{equation}\label{ee10}
\left\{
\begin{aligned}
d\bar{y}^{i}&=\left[A\bar{y}^{i}+Bu_i+F\mathbb{E}\alpha^{i}+b\right]dt
+\left[Du_i+\sigma\right]dW_i(t),\\
      d\alpha^{j}&=\left[A\alpha^{j}+B\varphi(\beta^j,\gamma^j)
      +F\mathbb{E}\alpha^{j}+b\right]dt
      +\left[D\varphi(\beta^j,\gamma^j)+\sigma\right]dW_j(t),\\
      d\beta^{j}&=-\big[A^T\beta^{j}-Q(\alpha^{j}-\mathbb{E}\alpha^{j})\big]dt+\gamma^{j}dW_j(t),\\
      \bar{y}^{i}(0)&=\alpha^{j}(0)=x,\qquad \beta^{j}(T)=-G\big(\alpha^{j}(T)-\mathbb{E}\alpha^{j}(T)\big).
    \end{aligned}
    \right.
\end{equation}
We have the following results:
\begin{lemma}\label{lemma 2}
\begin{align}\label{lemma 2 estimate}
\mathbb{E}\sup_{0\leq t\leq T}\Big|y^{(N)}(t)-\mathbb{E}\alpha^{i}(t)\Big|^2=O\Big(\frac{1}{N}\Big).
\end{align}
\end{lemma}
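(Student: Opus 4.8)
The plan is to mimic the proof of Lemma \ref{lemma for xN and m}, carefully accounting for the asymmetry caused by the single perturbed agent $\mathcal{A}_i$. First I would sum the state equations in (\ref{ee9}) over all $1\leq k\leq N$ and divide by $N$, isolating the contribution of agent $i$ (which uses the perturbation $u_i$) from the remaining $N-1$ agents (which use the MFG strategy $\varphi(\beta^j,\gamma^j)$). Writing $\mu(s):=\mathbb{E}\varphi(\beta^i(s),\gamma^i(s))$, which is independent of $i$ because the triples $(\alpha^j,\beta^j,\gamma^j)$ are identically distributed, and setting $\Delta(t):=y^{(N)}(t)-\mathbb{E}\alpha^i(t)$, I would use (\ref{ee2}) to obtain an SDE for $\Delta$ with $\Delta(0)=0$ whose drift is
\[
(A+F)\Delta+\frac{1}{N}\big(Bu_i-B\mu\big)+\frac{1}{N}\sum_{j\neq i}B\big[\varphi(\beta^j,\gamma^j)-\mu\big]
\]
and whose diffusion is $\frac{1}{N}[Du_i+\sigma]dW_i+\frac{1}{N}\sum_{j\neq i}[D\varphi(\beta^j,\gamma^j)+\sigma]dW_j$.

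Next I would estimate $\mathbb{E}\sup_{0\leq s\leq t}|\Delta(s)|^2$ term by term after applying $(x+y)^2\leq 2x^2+2y^2$, the Cauchy--Schwarz and BDG inequalities, exactly as in (\ref{ee3}). The agent-$i$ contributions $\frac{1}{N}(Bu_i-B\mu)$ in the drift and $\frac{1}{N}[Du_i+\sigma]dW_i$ in the diffusion each carry a single $L^2$-bounded term with a $1/N$ prefactor, so by (\ref{boundness of control}) together with the boundedness of $\sigma$ and $\mu$ their squared contributions are $O(1/N^2)$. The remaining $j\neq i$ terms are treated precisely as in Lemma \ref{lemma for xN and m}: since the $(\beta^j,\gamma^j)$ are independent and identically distributed, the cross expectations of $\varphi(\beta^j,\gamma^j)-\mu$ vanish, whence $\mathbb{E}\big|\frac{1}{N}\sum_{j\neq i}(\varphi(\beta^j,\gamma^j)-\mu)\big|^2=O(1/N)$, and the corresponding stochastic integral is controlled via BDG and the independence of the $W_j$ to be $O(1/N)$. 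Collecting these yields
\[
\mathbb{E}\sup_{0\leq s\leq t}|\Delta(s)|^2\leq C_0\,\mathbb{E}\int_0^t|\Delta(s)|^2\,ds+O\Big(\frac{1}{N}\Big),
\]
and Gronwall's inequality then gives (\ref{lemma 2 estimate}).

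The step I expect to be delicate rather than genuinely hard is the bookkeeping of the asymmetry: one must verify that the lone perturbed control $u_i$, which is only $L^2$-bounded and not identically distributed with the others, enters with a $1/N$ factor so that its squared contribution is $O(1/N^2)$ and does not degrade the target rate $O(1/N)$. This relies crucially on the uniform-in-$N$ bound (\ref{boundness of control}), which in turn was obtained from Lemma \ref{first lemma for cost} by restricting attention to perturbations with $\mathcal{J}_i(u_i,\bar u_{-i})\leq\mathcal{J}_i(\bar u_i,\bar u_{-i})$. Apart from this point, the argument is a routine adaptation of Lemma \ref{lemma for xN and m}.
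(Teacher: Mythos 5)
Your proposal is correct and follows essentially the same route as the paper's proof: sum the dynamics in (\ref{ee9}), subtract (\ref{ee2}), isolate the agent-$i$ terms with their $1/N$ prefactor (controlled by (\ref{boundness of control})), use independence and identical distribution of $(\beta^j,\gamma^j)$, $j\neq i$, to kill the cross terms, then apply BDG and Gronwall. The only cosmetic difference is that you center the $j\neq i$ sum at $\mu$ directly (absorbing the $\tfrac{1}{N}B\mu$ bias into the agent-$i$ term), whereas the paper keeps $\tfrac{1}{N}\sum_{j\neq i}\varphi(\beta^j,\gamma^j)-\mu$ intact and splits off the bias afterwards via $(x+y)^2\leq 2x^2+2y^2$; the two bookkeepings are equivalent.
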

\begin{proof}
By (\ref{ee9}), we get
\begin{equation}\label{ee12}
\left\{
        \begin{aligned}dy^{(N)}&=\left[(A+F)y^{(N)}\!+\!\frac{1}{N}Bu_i\!+\!\frac{1}{N}\sum\limits_{j=1,j\neq i}^{N}
        B\varphi(\beta^j,\gamma^j)\!+\!b\right]dt \\
        &\qquad\!+\!\frac{1}{N}\sum\limits_{j=1}^{N}\sigma dW_j(t)+\!\frac{1}{N}Du_idW_i(t)\!+\!\frac{1}{N}\sum\limits_{j=1,j\neq i}^{N}D\varphi(\beta^j,\gamma^j)dW_j(t), \\
       y^{(N)}(0)&=x.
    \end{aligned}
    \right.
\end{equation}
Let us denote $\Pi:=y^{(N)}-\mathbb{E}\alpha^i$, and recall (\ref{ee2}) which is
\[
\left\{
\begin{aligned}
d(\mathbb{E}\alpha^{i})&=\left[A\mathbb{E}\alpha^{i}+\mathbb{E}B\varphi(\beta^i,\gamma^i)
+F\mathbb{E}\alpha^{i}+b\right]dt,\\
\mathbb{E}\alpha^{i}(0)&=x,
\end{aligned}
\right.
\]
we have
\begin{equation*}
\left\{
\begin{aligned}
d\Pi=&\Bigg[(A+F)\Pi\!+\!\frac{1}{N}Bu_i\!+\!\left(\frac{1}{N}\sum\limits_{j=1,j\neq i}^{N}
        B\varphi(\beta^j,\gamma^j)-\mathbb{E}B\varphi(\beta^j,\gamma^j)‰\right)\Bigg]dt\\
        &\frac{1}{N}\sum\limits_{j=1}^{N}\sigma dW_j(t)
         +\frac{1}{N}Du_idW_i(t)\!+\!\frac{1}{N}\sum\limits_{j=1,j\neq i}^{N}D\varphi(\beta^j,\gamma^j)dW_j(t),\\
\Pi(0)&=0.
\end{aligned}
\right.
\end{equation*}
By the Cauchy-Schwartz inequality as well as the BDG inequality,
we obtain that there exists a constant $C_0$ independent of $N$ which may vary line by line such that, for any $t\in[0,T]$,
\begin{equation}\label{ee11}
\begin{aligned}
\mathbb{E}\sup_{0\leq s\leq t}|\Pi(s)|^2&\leq C_0\mathbb{E}\int_0^t\left(|\Pi(s)|^2\!+\!\frac{1}{N^2}|u_i(s)|^2\right)ds\\
&\!+\!C_0\mathbb{E}\int_0^t\left|\frac{1}{N}\sum\limits_{j=1,j\neq i}^{N}\varphi(\beta^j(s),\gamma^j(s))
\!-\!\mathbb{E}\varphi(\beta^j(s),\gamma^j(s))\right|^2ds\\
&\!+\!\frac{C_0}{N^2}\mathbb{E}\sum_{j=1}^{N}\int_0^t\left|\sigma(s)\right|^2ds\\
&\!+\!\frac{C_0}{N^2}\mathbb{E}\int_0^t|u_i(s)|^2ds\!+\!\frac{C_0}{N^2}\mathbb{E}\sum\limits_{j=1,j\neq i}^{N}\int_0^t|\varphi(\beta^j(s),\gamma^j(s)|^2ds.
\end{aligned}
\end{equation}
On the one hand, by denoting $\mu(s):=\mathbb{E}\varphi(\beta^j(s),\gamma^j(s))$ (note that since $(\alpha^j,\beta^j,\gamma^j)$, $1\leq j\leq N$, $j\neq i$, are independent identically distributed, thus $\mu$ is independent of $j$), we have
\[
\begin{aligned}
&\mathbb{E}\left|\frac{1}{N}\sum\limits_{j=1,j\neq i}^{N}\varphi(\beta^j(s),\gamma^j(s))
-\mu(s)\right|^2\\
\leq&2\mathbb{E}\left|\frac{1}{N}\sum\limits_{j=1,j\neq i}^{N}\varphi(\beta^j(s),\gamma^j(s))
-\frac{N-1}{N}\mu(s)\right|^2
+2\mathbb{E}\left|\frac{1}{N}\mu(s)\right|^2\\
=&2\frac{(N-1)^2}{N^2}\mathbb{E}\left|\frac{1}{N-1}\sum\limits_{j=1,j\neq i}^{N}\varphi(\beta^j(s),\gamma^j(s))
-\mu(s)\right|^2+\frac{2}{N^2}\mathbb{E}|\mu(s)|^2.
\end{aligned}
\]
Then, due to the fact that $(\beta^i,\gamma^i), 1\leq i\leq N$ are identically distributed and  $\varphi(\beta^i,\gamma^i)\in L^{2}_{\mathbb{F}^i}(0,T;\Gamma)$,  similarly to Lemma \ref{lemma for xN and m} we can obtain that there exists a constant $C_0$ independent of $N$ such that
\[
\begin{aligned}
&\int_0^t\mathbb{E}\left|\frac{1}{N}\sum\limits_{j=1,j\neq i}^{N}\varphi(\beta^j(s),\gamma^j(s))
-\mathbb{E}\varphi(\beta^j(s),\gamma^j(s))\right|^2\\
\leq & \frac{C_0(N-1)^2}{N^2}\int_0^t\mathbb{E}\left|\frac{1}{N-1}\sum\limits_{j=1,j\neq i}^{N}\varphi(\beta^j(s),\gamma^j(s))-\mu(s)\right|^2ds
+\frac{C_0}{N^2}\int_0^t\mathbb{E}|\mu(s)|^2ds\\
=& \frac{C_0(N-1)}{N^2}\int_0^t\mathbb{E}\left|\varphi(\beta^j(s),\gamma^j(s))
\!-\!\mu(s)\right|^2ds+\frac{C_0}{N^2}\int_0^t\mathbb{E}|\mu(s)|^2ds\\
=&O\Big(\frac{1}{N}\Big).
\end{aligned}
\]
In addition, due to (\ref{boundness of control}) and (\ref{boundedness of yi}), we get
\[
\begin{aligned}
\frac{C_0}{N^2}\mathbb{E}\int_0^t|u_i(s)|^2ds
+\frac{C_0}{N^2}\mathbb{E}\sum_{j=1}^{N}\int_0^t\left|\sigma(s)\right|^2ds
= O\Big(\frac{1}{N}\Big).
\end{aligned}
\]
and similarly, since  $(\beta^j,\gamma^j)$, $1\leq j\leq N$, $j\neq i$, are identically distributed, we have
\[
\frac{C_0}{N^2}\mathbb{E}\sum\limits_{j=1,j\neq i}^{N}\int_0^t|\varphi(\beta^j(s),\gamma^j(s)|^2ds=O\Big(\frac{1}{N}\Big).
\]
Therefore, from above estimates, we get from (\ref{ee11}) that, for any $t\in[0,T]$,
\[
\mathbb{E}\sup_{0\leq s\leq T}|\Pi(s)|^2\leq C_0\mathbb{E}\int_0^t|\Pi(s)|^2ds+O\Big(\frac{1}{N}\Big).
\]
Finally, by using Gronwall's inequality, we complete the proof.
\end{proof}
\begin{lemma}\label{lemma 3}
\begin{align}\label{lemma 3 estimate}
\mathbb{E}\sup_{0\leq t\leq T}\Big|y^{i}_t-\bar{y}^{i}_t\Big|^2=O\Big(\frac{1}{N}\Big).
\end{align}
\end{lemma}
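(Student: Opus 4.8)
The plan is to mirror, almost verbatim, the argument already used for Lemma \ref{lemma for x and xi}, since $y^i$ versus $\bar{y}^i$ plays exactly the role that $\breve{x}^i$ versus $\alpha^i$ played there. First I would subtract the $\bar{y}^i$-equation in \eqref{ee10} from the $y^i$-equation in \eqref{ee9}. The crucial observation is that both dynamics carry the \emph{same} control input $u_i$, the \emph{same} initial datum $x$, and, most importantly, the \emph{same} diffusion coefficient $Du_i+\sigma$ driving the \emph{same} Brownian motion $W_i$. Consequently the terms $Bu_i$ and $b$ in the drift cancel and the entire stochastic integral cancels identically, leaving a purely pathwise (random ODE) equation with no martingale part:
\begin{equation*}
\left\{
\begin{aligned}
d\big(y^i-\bar{y}^i\big)&=\Big[A\big(y^i-\bar{y}^i\big)+F\big(y^{(N)}-\mathbb{E}\alpha^i\big)\Big]dt,\\
\big(y^i-\bar{y}^i\big)(0)&=0.
\end{aligned}
\right.
\end{equation*}

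This is a linear ODE in $y^i-\bar{y}^i$ with bounded coefficient $A$ and an inhomogeneous forcing term $F\big(y^{(N)}-\mathbb{E}\alpha^i\big)$, both boundedness properties coming from \textbf{(H1)}. The classical estimate for such (random) ODEs, followed by Gronwall's inequality, then yields a constant $C_0$ independent of $N$ such that
\begin{equation*}
\mathbb{E}\sup_{0\leq t\leq T}\big|y^i(t)-\bar{y}^i(t)\big|^2\leq C_0\,\mathbb{E}\int_0^T\big|y^{(N)}(s)-\mathbb{E}\alpha^i(s)\big|^2\,ds.
\end{equation*}
Finally I would invoke Lemma \ref{lemma 2}, whose estimate \eqref{lemma 2 estimate} asserts that $\mathbb{E}\sup_{0\leq t\leq T}|y^{(N)}(t)-\mathbb{E}\alpha^i(t)|^2=O(1/N)$; bounding the time integral on the right by $T$ times this supremum gives $\mathbb{E}\sup_{0\leq t\leq T}|y^i_t-\bar{y}^i_t|^2=O(1/N)$, which is precisely \eqref{lemma 3 estimate}.

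Because all the genuinely probabilistic content—the independence and law-of-large-numbers cancellation that produces the $1/N$ rate—has already been packaged inside Lemma \ref{lemma 2}, there is essentially no obstacle at this stage. The one point deserving care is the verification that the diffusion coefficients cancel exactly, so that the difference process carries no stochastic integral: this is what keeps the estimate at the level of a deterministic Gronwall argument, avoiding any Burkholder--Davis--Gundy step or any contribution from a $\gamma^i$-type martingale term. Once this cancellation is confirmed, the remainder is the same one-line argument as in Lemma \ref{lemma for x and xi}.
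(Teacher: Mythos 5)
Your proposal is correct and follows exactly the paper's own argument: subtract the first equations of \eqref{ee9} and \eqref{ee10}, observe that the control, constant drift, and diffusion terms cancel (leaving the noise-free linear equation for $y^i-\bar{y}^i$), and conclude via the classical SDE estimate, Gronwall's inequality, and Lemma \ref{lemma 2}. Your explicit remark that the stochastic integrals cancel identically---so no BDG step is needed---is precisely the point the paper leaves implicit.
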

\begin{proof}
From respectively the first equation of (\ref{ee9}) and (\ref{ee10}), we obtain
\[
\left\{
\begin{aligned}
d(y^i-\bar{y}^i)&=\left[A(y^i-\bar{y}^i)+F(y^{(N)}-\mathbb{E}\alpha^i)\right]dt
,\\
y^i(0)-\bar{y}^i(0)&=0.
\end{aligned}
\right.
\]
With the help of classical estimates of SDE,  Gronwall's inequality and  (\ref{lemma 2 estimate}) of Lemma \ref{lemma 2},
it is easily to obtain (\ref{lemma 3 estimate}). The proof is completed.
\end{proof}

\begin{lemma}\label{lemma 4}
For all $ 1\leq i\leq N,$ for the perturbation control $u_{i}$, we have
\begin{equation*}
\Big|\mathcal{J}_i({u}_i, \bar{u}_{-i})-J_i({u}_i)\Big|=O\Big(\frac{1}{\sqrt{N}}\Big).
\end{equation*}
\end{lemma}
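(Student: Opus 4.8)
The plan is to mirror the proof of Lemma \ref{first lemma for cost}, replacing the equilibrium strategy $\bar u_i$ by the perturbation $u_i$ and updating the associated states. First I would write both cost functionals explicitly: $\mathcal{J}_i(u_i,\bar u_{-i})$ is built from the realized state $y^i$ and state-average $y^{(N)}$ of system (\ref{ee9}), whereas $J_i(u_i)$ is built from the limiting perturbed state $\bar y^i$ of (\ref{ee10}) together with the frozen mean $\mathbb{E}\alpha^i$. Since the \emph{same} control $u_i$ enters both functionals, the running terms $\langle R u_i,u_i\rangle$ are identical and cancel in the difference, leaving only the $Q$-weighted running terms and the $G$-weighted terminal terms to estimate.

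Next I would apply the elementary identity already exploited in Lemma \ref{first lemma for cost}, namely $\langle Q(a-b),a-b\rangle-\langle Q(c-d),c-d\rangle=\langle Q\Delta,\Delta\rangle+2\langle Q\Delta,c-d\rangle$ with $\Delta:=(a-b)-(c-d)$, here taking $a=y^i$, $b=y^{(N)}$, $c=\bar y^i$, $d=\mathbb{E}\alpha^i$, so that $\Delta=(y^i-\bar y^i)-(y^{(N)}-\mathbb{E}\alpha^i)$. The two pieces of $\Delta$ are precisely controlled by the preceding lemmas: Lemma \ref{lemma 3} gives $\mathbb{E}\sup_{t}|y^i_t-\bar y^i_t|^2=O(1/N)$ and Lemma \ref{lemma 2} gives $\mathbb{E}\sup_{t}|y^{(N)}_t-\mathbb{E}\alpha^i_t|^2=O(1/N)$, whence $\mathbb{E}\int_0^T|\Delta_t|^2\,dt=O(1/N)$. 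Consequently the pure quadratic term $\mathbb{E}\int_0^T\langle Q\Delta,\Delta\rangle\,dt$ is $O(1/N)$, which is negligible compared with the target rate.

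For the cross term I would invoke Cauchy-Schwarz in the form $\mathbb{E}\int_0^T\langle Q\Delta,c-d\rangle\,dt\le C_0(\mathbb{E}\int_0^T|\Delta_t|^2\,dt)^{1/2}(\mathbb{E}\int_0^T|\bar y^i_t-\mathbb{E}\alpha^i_t|^2\,dt)^{1/2}$, in which the first factor is $O(1/\sqrt{N})$ by the previous step while the second factor is $O(1)$. The terminal $G$-term is handled in exactly the same way evaluated at $t=T$, using Lemmas \ref{lemma 2} and \ref{lemma 3} at the terminal time. Summing the quadratic and cross contributions over running and terminal parts then yields $|\mathcal{J}_i(u_i,\bar u_{-i})-J_i(u_i)|=O(1/\sqrt{N})$.

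The main obstacle is the uniform-in-$N$ second-moment bound $\mathbb{E}\sup_{0\le t\le T}|\bar y^i_t-\mathbb{E}\alpha^i_t|^2\le C_0$ required for the second Cauchy-Schwarz factor, because here $u_i$ is an arbitrary admissible perturbation rather than the equilibrium control, so no a priori bound on its state is automatic. This is exactly where the control estimate (\ref{boundness of control}) enters: restricting attention to perturbations with $\mathcal{J}_i(u_i,\bar u_{-i})\le\mathcal{J}_i(\bar u_i,\bar u_{-i})$ forces $\mathbb{E}\int_0^T|u_i(t)|^2\,dt\le C_0$, and feeding this into the standard linear-SDE estimate for (\ref{ee10}) gives $\mathbb{E}\sup_t|\bar y^i_t|^2\le C_0$; the boundedness of $\mathbb{E}\alpha^i$ follows from the FBSDE estimates behind Lemma \ref{lemma for xN}. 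With these bounds secured, the remainder is a routine repetition of Lemma \ref{first lemma for cost}.
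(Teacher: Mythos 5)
Your proposal is correct and follows essentially the same route as the paper's proof: the same cancellation of the $\langle Ru_i,u_i\rangle$ terms, the same quadratic identity with $\Delta=(y^i-\bar y^i)-(y^{(N)}-\mathbb{E}\alpha^i)$, the same use of Lemmas \ref{lemma 2} and \ref{lemma 3} for the $O(1/N)$ quadratic part and Cauchy--Schwarz for the $O(1/\sqrt N)$ cross term. Your explicit justification of the uniform bound $\mathbb{E}\sup_t|\bar y^i_t|^2\le C_0$ via (\ref{boundness of control}) is exactly the ingredient the paper invokes (it simply asserts the bound), so there is nothing to add.
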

\begin{proof}
Recall (\ref{original cost}), (\ref{limit cost}) and (\ref{limit average process}), we have
\begin{equation*}
\begin{aligned}
&\mathcal{J}_i({u}_i, \bar{u}_{-i})-J_i({u}_i)\\
=&\frac{1}{2}\mathbb{E}\bigg [ \int_0^T\left( \Big <Q(t)\big(y^i(t)-y^{(N)}(t)\big),y^i(t)-y^{(N)}(t)\Big>-\Big <Q(t)\big(\bar{y}^i(t)
-\mathbb{E}\alpha^i(t)\big),\bar{y}^i(t)-\mathbb{E}\alpha^i(t)\Big>\right)dt\\
&+\Big<G\big(y^i(T)-y^{(N)}(T)\big),y^i(T)-y^{(N)}(T)\Big>-\Big<G\big(\bar{y}^i(T)
-\mathbb{E}\alpha^i(T)\big),\bar{y}^i(T)-\mathbb{E}\alpha^i(T)\Big>\bigg].
\end{aligned}
\end{equation*}
Using Lemma \ref{lemma 2} and Lemma \ref{lemma 3} as well as $\mathbb{E}\sup_{0\leq t\leq T}\left(|\bar{y}^i(t)|^2+|\alpha^i(t)|^2\right)\leq C_0$,
 for some constant $C_0$ independent of $N$ which may vary line by line in the following,  we have
\[
\begin{aligned}
&\left|\mathbb{E}\bigg [ \int_0^T\left( \Big <Q(t)\big(y^i(t)-y^{(N)}(t)\big),y^i(t)-y^{(N)}(t)\Big>
-\Big <Q(t)\big(\bar{y}^i(t)-\mathbb{E}\alpha^i(t)\big),\bar{y}^i(t)-\mathbb{E}\alpha^i(t)\Big>\right)dt\right|\\
\leq& C_0\int_0^T\mathbb{E}\left|y^i(t)-y^{(N)}(t)-(\bar{y}^i(t)-\mathbb{E}\alpha^i(t))\right|^2dt\\
&\qquad\qquad\qquad+C_0\int_0^T\mathbb{E}\left[\left|y^i(t)-y^{(N)}(t)-(\bar{y}^i(t)-\mathbb{E}\alpha^i(t))\right|\cdot
\left|\bar{y}^i(t)-\mathbb{E}\alpha^i(t)\right|\right]dt\\
\leq &C_0\int_0^T\mathbb{E}\left|y^i(t)-\bar{y}^i(t)\right|^2dt
+C_0\int_0^T\mathbb{E}\left|y^{(N)}(t)-\mathbb{E}\alpha^i(t)\right|^2dt\\
&\qquad\qquad\qquad+C_0\int_0^T\left(\mathbb{E}\left|y^i(t)-y^{(N)}(t)-
(\bar{y}^i(t)-\mathbb{E}\alpha^i(t))\right|^2\right)^{\frac{1}{2}}
\left(\mathbb{E}\left|\bar{y}^i(t)-\mathbb{E}\alpha^i(t)\right|^2\right)^{\frac{1}{2}}dt\\
\leq &C_0\int_0^T\mathbb{E}\left|y^i(t)-\bar{y}^i(t)\right|^2dt
+C_0\int_0^T\mathbb{E}\left|y^{(N)}(t)-\mathbb{E}\alpha^i(t)\right|^2dt\\
&\qquad\qquad\qquad+C_0\int_0^T\left(\mathbb{E}\left|y^i(t)-\bar{y}^i(t)\right|^2
+\mathbb{E}\left|y^{(N)}(t)-\mathbb{E}\alpha^i(t)\right|^2\right)^{\frac{1}{2}}
dt\\
=& O\left(\frac{1}{\sqrt{N}}\right).
\end{aligned}
\]
With similar argument, we can show that
\[
\begin{aligned}
&\left|\mathbb{E}\bigg [\Big<G\big(y^i(T)-y^{(N)}(T)\big),y^i(T)-y^{(N)}(T)\Big>-\Big<G\big(\bar{y}^i(T)
-\mathbb{E}\alpha^i(T)\big),\bar{y}^i(T)-\mathbb{E}\alpha^i(T)\Big>\bigg]\right|\\
=&O\left(\frac{1}{\sqrt{N}}\right).
\end{aligned}
\]
The proof is completed by noticing (\ref{ee12}).
\end{proof}
\medskip

\noindent\textbf{Proof of Theorem \ref{Nash equilibrium theorem}:} Now, we consider the $\epsilon$-Nash equilibrium for $\mathcal{A}_i$ for
\textbf{Problem (CC)}. Combining Lemma \ref{first lemma for cost} and Lemma \ref{lemma 4}, we have
\[
\mathcal{J}_i(\bar{u}_i, \bar{u}_{-i})=J_i(\bar{u}_i)+O\Big(\frac{1}{\sqrt{N}}\Big)\leq J_i({u}_i)+O\Big(\frac{1}{\sqrt{N}}\Big)
=\mathcal{J}_i({u}_i, \bar{u}_{-i})+O\Big(\frac{1}{\sqrt{N}}\Big).
\]
Consequently, Theorem \ref{Nash equilibrium theorem} holds with $\epsilon=O(\frac{1}{\sqrt{N}})$.\hfill$\Box$

\section{Appendix}

For the readers' convenient, let us recall the following properties of  projection $\mathbf{P}_{\Gamma}$ onto a closed convex set, see \cite{Brezis 2010}, Chapter 5.
\begin{theorem}\label{projection theorem}
For a nonempty  closed  convex set $\Gamma\subset\mathbb{R}^m$, for every $x\in\mathbb{R}^m$, there exists a unique $x^\ast\in\Gamma$, such that
\[
|x-x^\ast|=\min_{y\in\Gamma}|x-y|=:dist(x,\Gamma).
\]
Moreover, $x^\ast$ is characterized by the property
\begin{equation}\label{projection characterization}
x^\ast\in\Gamma, \quad \big<x^\ast-x, x^\ast-y\big>\leq0 \qquad \forall y \in \Gamma.
\end{equation}
The above element $x^\ast$ is called the projection of $x$ onto $\Gamma$ and is denoted by $\mathbf{P}_{\Gamma}[x]$.
\end{theorem}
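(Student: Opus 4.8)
The plan is to prove the three assertions---existence of a minimizer, the variational characterization $\langle x^\ast-x,x^\ast-y\rangle\le 0$, and uniqueness---separately, exploiting only that $\mathbb{R}^m$ carries the inner product $\langle\cdot,\cdot\rangle$ with induced norm $|\cdot|$, so that both the parallelogram identity and (since $m<\infty$) compactness are at my disposal. Throughout I write $d:=\inf_{y\in\Gamma}|x-y|$, which is finite and nonnegative because $\Gamma\neq\emptyset$.

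First I would settle existence. Choose a minimizing sequence $(y_n)\subset\Gamma$ with $|x-y_n|\to d$. The bound $|y_n|\le|x|+|x-y_n|$ shows $(y_n)$ is bounded, so by Bolzano--Weierstrass it has a subsequence converging to some $x^\ast\in\mathbb{R}^m$; since $\Gamma$ is closed, $x^\ast\in\Gamma$, and continuity of the norm gives $|x-x^\ast|=d$. The alternative, norm-only route of Brezis avoids compactness: applying the parallelogram identity to $x-y_n$ and $x-y_k$, together with convexity forcing $\tfrac{y_n+y_k}{2}\in\Gamma$ and hence $|x-\tfrac{y_n+y_k}{2}|\ge d$, shows $(y_n)$ is Cauchy, which is the argument that extends to general Hilbert spaces.

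Next I would prove the characterization in both directions, which simultaneously yields uniqueness. For necessity, suppose $x^\ast$ minimizes and fix $y\in\Gamma$; convexity gives $x^\ast+t(y-x^\ast)\in\Gamma$ for $t\in(0,1]$, so $|x-x^\ast|^2\le|x-x^\ast-t(y-x^\ast)|^2=|x-x^\ast|^2-2t\langle x-x^\ast,y-x^\ast\rangle+t^2|y-x^\ast|^2$. Cancelling, dividing by $t$, and letting $t\downarrow0$ gives $\langle x-x^\ast,y-x^\ast\rangle\le0$, i.e. $\langle x^\ast-x,x^\ast-y\rangle\le0$. For sufficiency, suppose $x^\ast\in\Gamma$ satisfies the inequality; then for every $y\in\Gamma$ the expansion $|x-y|^2=|x-x^\ast|^2+2\langle x-x^\ast,x^\ast-y\rangle+|x^\ast-y|^2\ge|x-x^\ast|^2+|x^\ast-y|^2$ (the cross term being $\ge0$ by hypothesis) shows $x^\ast$ is a minimizer and, moreover, that $|x-y|>|x-x^\ast|$ whenever $y\neq x^\ast$; this strict inequality is exactly uniqueness.

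The only step requiring genuine analytic input is existence: one must combine coercivity of $y\mapsto|x-y|$ with the closedness of $\Gamma$, either through finite-dimensional compactness or the parallelogram-based Cauchy argument. The characterization and uniqueness are then elementary consequences of convexity and the inner-product expansion, so I expect existence to be the only place needing care. Finally, since the weighted norm $\|\cdot\|_R$ used in the body is itself induced by the inner product $\langle R^{\frac12}\cdot,R^{\frac12}\cdot\rangle$, the entire argument applies verbatim with $|\cdot|$ replaced by $\|\cdot\|_R$, which is the form actually invoked in the definition of $\varphi$.
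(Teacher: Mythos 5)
Your proof is correct, but note that the paper does not actually prove this theorem at all: it is recalled as a classical fact with a citation to Brezis, Chapter 5, and the paper's appendix only proves the two propositions that follow from it (the monotonicity inequality $\big|\mathbf{P}_{\Gamma}[x]-\mathbf{P}_{\Gamma}[y]\big|^2\leq\big<\mathbf{P}_{\Gamma}[x]-\mathbf{P}_{\Gamma}[y],x-y\big>$ and the Lipschitz property). So your argument supplies what the paper outsources, and it does so correctly. Your existence step via Bolzano--Weierstrass is the natural finite-dimensional shortcut; the parallelogram/Cauchy argument you sketch as an alternative is the one in Brezis, and it is the version that survives in an infinite-dimensional Hilbert space, where bounded sequences need not have norm-convergent subsequences. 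Your treatment of the characterization (necessity via the convex perturbation $x^\ast+t(y-x^\ast)$ and the limit $t\downarrow 0$; sufficiency via the expansion $|x-y|^2=|x-x^\ast|^2+2\langle x-x^\ast,x^\ast-y\rangle+|x^\ast-y|^2$ with nonnegative cross term) is the standard one, and the resulting strict inequality $|x-y|>|x-x^\ast|$ for $y\neq x^\ast$ does yield uniqueness, since any minimizer satisfies the variational inequality by necessity and then strictly beats every other point of $\Gamma$. Your closing observation that the argument transfers verbatim to the inner product $\langle R^{\frac{1}{2}}\cdot,R^{\frac{1}{2}}\cdot\rangle$ is precisely what the paper needs when it applies $\mathbf{P}_{\Gamma}$ under the norm $\|\cdot\|_{R}$ in the definition of $\varphi$.
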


From above theorem, it is easy to show that
\begin{proposition}
Let $\Gamma\subset\mathbb{R}^m$ be a nonempty  closed  convex set, then we have
 \begin{equation}\label{projection inequality}
\big|\mathbf{P}_{\Gamma}[x]-\mathbf{P}_{\Gamma}[y]\big|^2\leq    \big<\mathbf{P}_{\Gamma}[x]-\mathbf{P}_{\Gamma}[y],x-y\big>.
\end{equation}
\end{proposition}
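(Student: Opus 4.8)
The plan is to derive the inequality directly from the variational characterization of the projection stated in equation~\eqref{projection characterization} of Theorem~\ref{projection theorem}, which is the only tool needed here. First I would fix notation by writing $u:=\mathbf{P}_{\Gamma}[x]$ and $v:=\mathbf{P}_{\Gamma}[y]$, so that both $u,v\in\Gamma$ and the target inequality reads $|u-v|^2\le\langle u-v,\,x-y\rangle$.

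Next I would invoke the characterization twice, each time using the \emph{other} projection as the admissible test point $y\in\Gamma$ in \eqref{projection characterization}. Applying it to the point $x$, whose projection is $u$, and choosing the test element $v\in\Gamma$, gives
\[
\langle u-x,\,u-v\rangle\le 0.
\]
Symmetrically, applying it to the point $y$, whose projection is $v$, and choosing the test element $u\in\Gamma$, gives
\[
\langle v-y,\,v-u\rangle\le 0.
\]

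The decisive step is then simply to add these two inequalities after rewriting the second one with $v-u=-(u-v)$, which turns it into $\langle y-v,\,u-v\rangle\le 0$. Summing with the first inequality yields
\[
\langle (u-x)+(y-v),\,u-v\rangle\le 0,
\]
and collecting the terms in $u-v$ and $x-y$ produces $\langle u-v,\,u-v\rangle-\langle x-y,\,u-v\rangle\le 0$, i.e.\ $|u-v|^2\le\langle u-v,\,x-y\rangle$, which is exactly \eqref{projection inequality}. There is no genuine obstacle in this argument: the entire content is the firmly nonexpansive property of the metric projection onto a convex set, and the only point requiring a little care is bookkeeping the signs when the two variational inequalities are combined so that the cross terms cancel correctly. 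As an immediate byproduct, Cauchy--Schwarz applied to \eqref{projection inequality} gives the $1$-Lipschitz (nonexpansiveness) bound $|\mathbf{P}_{\Gamma}[x]-\mathbf{P}_{\Gamma}[y]|\le|x-y|$, which is the form actually used in the Lipschitz estimates for $\varphi$ elsewhere in the paper.
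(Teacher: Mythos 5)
Your proof is correct and follows exactly the same route as the paper's: apply the variational characterization \eqref{projection characterization} twice with the two projections as mutual test points, add the resulting inequalities, and rearrange to obtain \eqref{projection inequality}. The concluding remark that Cauchy--Schwarz then yields nonexpansiveness is likewise the content of the paper's subsequent proposition.
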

\begin{proof} From (\ref{projection characterization}), we have
\begin{equation}\label{projection 1}
\big<\mathbf{P}_{\Gamma}[x]-x, \mathbf{P}_{\Gamma}[x]-z\big>\leq0 \qquad \forall z \in \Gamma.
\end{equation}
and
\begin{equation}\label{projection 2}
\big<\mathbf{P}_{\Gamma}[y]-y, \mathbf{P}_{\Gamma}[y]-z\big>\leq0 \qquad \forall z \in \Gamma.
\end{equation}
Choosing $z=\mathbf{P}_{\Gamma}[y]$ in (\ref{projection 1}) and $z=\mathbf{P}_{\Gamma}[x]$ in (\ref{projection 2}), then adding the corresponding inequalities, we obtain
\[
\big<\mathbf{P}_{\Gamma}[x]-x, \mathbf{P}_{\Gamma}[x]-\mathbf{P}_{\Gamma}[y]\big>+\big<\mathbf{P}_{\Gamma}[y]-y, \mathbf{P}_{\Gamma}[y]-\mathbf{P}_{\Gamma}[x]\big>\leq0,
\]
which yields obviously
\[
\big|\mathbf{P}_{\Gamma}[x]-\mathbf{P}_{\Gamma}[y]\big|^2\leq    \big<\mathbf{P}_{\Gamma}[x]-\mathbf{P}_{\Gamma}[y],x-y\big>.
\]\hfill
\end{proof}
\begin{proposition}
Let $\Gamma\subset\mathbb{R}^m$ be a nonempty  closed  convex set, then the projection $\mathbf{P}_{\Gamma}$ does not increase the distance, i.e.
 \begin{equation}\label{projection lipschitz}
    \big|\mathbf{P}_{\Gamma}[x]-\mathbf{P}_{\Gamma}[y]\big| \leq \big|x-y\big|.
\end{equation}
\end{proposition}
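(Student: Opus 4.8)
The plan is to derive this non-expansiveness estimate as an immediate consequence of the co-coercivity inequality \eqref{projection inequality} established in the preceding proposition, combined with the Cauchy--Schwarz inequality. No new use of the characterization \eqref{projection characterization} is required; the previous proposition already contains the essential geometric content.

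Concretely, I would abbreviate $a := \mathbf{P}_{\Gamma}[x]-\mathbf{P}_{\Gamma}[y]$. By \eqref{projection inequality} we have $|a|^2 \leq \big\langle a, x-y\big\rangle$, and then Cauchy--Schwarz applied to the right-hand side gives $\big\langle a, x-y\big\rangle \leq |a|\,|x-y|$. Chaining these yields $|a|^2 \leq |a|\,|x-y|$.

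To finish, I would split into two cases. If $|a|=0$, then $\big|\mathbf{P}_{\Gamma}[x]-\mathbf{P}_{\Gamma}[y]\big| = 0 \leq |x-y|$ trivially. If $|a|>0$, divide both sides of $|a|^2 \leq |a|\,|x-y|$ by the positive scalar $|a|$ to obtain exactly \eqref{projection lipschitz}. This case analysis is the only mild subtlety, and it is needed solely to justify the division; there is no genuine obstacle here, since the hard analytic work is entirely encapsulated in the earlier proposition.

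I do not expect any serious difficulty in this argument: it is a standard two-line deduction, and the statement is the classical fact that projection onto a closed convex set in a Hilbert (here Euclidean) space is $1$-Lipschitz. The only point worth stating cleanly is that the co-coercivity bound \eqref{projection inequality} is strictly stronger than mere Lipschitz continuity, so \eqref{projection lipschitz} follows from it but not conversely.
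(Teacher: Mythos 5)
Your proof is correct and follows essentially the same route as the paper's: both deduce \eqref{projection lipschitz} from the co-coercivity bound \eqref{projection inequality} followed by the Cauchy--Schwarz inequality and division by $\big|\mathbf{P}_{\Gamma}[x]-\mathbf{P}_{\Gamma}[y]\big|$. Your explicit case split for $\big|\mathbf{P}_{\Gamma}[x]-\mathbf{P}_{\Gamma}[y]\big|=0$ is merely a detail the paper leaves implicit when it says the inequality ``gives directly'' the result.
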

\begin{proof}
From  (\ref{projection inequality}), we have
\[
\big|\mathbf{P}_{\Gamma}[x]-\mathbf{P}_{\Gamma}[y]\big|^2\leq    \big<\mathbf{P}_{\Gamma}[x]-\mathbf{P}_{\Gamma}[y],x-y\big>\leq \left|\mathbf{P}_{\Gamma}[x]-\mathbf{P}_{\Gamma}[y]\right|\cdot|x-y|,
\]
which gives directly (\ref{projection lipschitz}).
\end{proof}

Now let us consider $\mathbb{R}^m$ and the projection $\mathbf{P}_{\Gamma}$ both with the norm $\|\cdot\|_{R_0}:=\langle R_0^{\frac{1}{2}}\cdot, R_0^{\frac{1}{2}}\cdot\rangle$, from (\ref{projection inequality}), we have
\begin{proposition}Let $\Gamma\subset\mathbb{R}^m$ be a nonempty  closed  convex set, then
\[
\langle\langle\mathbf{P}_{\Gamma}[x]
   -\mathbf{P}_{\Gamma}[y],x-y\rangle\rangle=\left\langle R^{\frac{1}{2}}\bigg(\mathbf{P}_{\Gamma}[x]
   -\mathbf{P}_{\Gamma}[y]\bigg),R^{\frac{1}{2}}(x-y)\right\rangle \ge 0.
\]
\end{proposition}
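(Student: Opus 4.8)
The plan is to observe that, since $R>0$ (so $R^{\frac{1}{2}}$ is symmetric and invertible), the bilinear form $\langle\langle u,v\rangle\rangle:=\langle R^{\frac{1}{2}}u,R^{\frac{1}{2}}v\rangle$ is a bona fide inner product on $\mathbb{R}^m$, making $\mathbb{R}^m$ a finite-dimensional (hence complete) Hilbert space with norm $\|\cdot\|_R$. With this in hand the claimed equality in the statement is merely the definition of $\langle\langle\cdot,\cdot\rangle\rangle$, so the entire content reduces to the inequality $\langle\langle\mathbf{P}_{\Gamma}[x]-\mathbf{P}_{\Gamma}[y],x-y\rangle\rangle\ge 0$.

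The key point I would emphasize is that neither Theorem \ref{projection theorem} and its variational characterization \eqref{projection characterization}, nor the two-line derivation of \eqref{projection inequality}, ever used any special feature of the Euclidean structure: they rely only on $\Gamma$ being closed and convex in a Hilbert space. Consequently the whole argument transports verbatim to the inner product $\langle\langle\cdot,\cdot\rangle\rangle$, provided $\mathbf{P}_{\Gamma}$ is read as the projection in the $\|\cdot\|_R$-sense — which is exactly the projection fixed in the sentence preceding the statement (and the one appearing in the consistency system \eqref{cc}). Thus the $\langle\langle\cdot,\cdot\rangle\rangle$-analogue of \eqref{projection inequality} yields
\[
\|\mathbf{P}_{\Gamma}[x]-\mathbf{P}_{\Gamma}[y]\|_R^2=\langle\langle\mathbf{P}_{\Gamma}[x]-\mathbf{P}_{\Gamma}[y],\mathbf{P}_{\Gamma}[x]-\mathbf{P}_{\Gamma}[y]\rangle\rangle\le\langle\langle\mathbf{P}_{\Gamma}[x]-\mathbf{P}_{\Gamma}[y],x-y\rangle\rangle.
\]

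To keep the step self-contained I would simply rerun the argument behind \eqref{projection inequality} in the new inner product: invoke \eqref{projection characterization} at $x$ and at $y$ — that is, write the $\langle\langle\cdot,\cdot\rangle\rangle$-versions of \eqref{projection 1} and \eqref{projection 2} — then choose $z=\mathbf{P}_{\Gamma}[y]$ in the first and $z=\mathbf{P}_{\Gamma}[x]$ in the second and add the two resulting inequalities. Because the left-hand side of the display above is a squared norm, it is nonnegative, so the right-hand side $\langle\langle\mathbf{P}_{\Gamma}[x]-\mathbf{P}_{\Gamma}[y],x-y\rangle\rangle\ge 0$, which is the assertion. There is essentially no obstacle; the only point requiring care is the bookkeeping that the projection in the statement is the $\|\cdot\|_R$-projection rather than the Euclidean one, which is precisely what licenses applying \eqref{projection characterization} with respect to $\langle\langle\cdot,\cdot\rangle\rangle$.
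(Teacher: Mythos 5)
Your proposal is correct and is essentially the paper's own argument: the paper, too, obtains this proposition simply by re-reading the projection inequality (\ref{projection inequality}) in the inner product $\langle\langle\cdot,\cdot\rangle\rangle$ induced by $R$, the point being that Theorem \ref{projection theorem} and the two-line derivation of (\ref{projection inequality}) use only the closed convexity of $\Gamma$ and the Hilbert-space structure, hence transport verbatim. Your additional bookkeeping --- that $\mathbf{P}_{\Gamma}$ is the $\|\cdot\|_{R}$-projection and that nonnegativity follows because the left side of the transported inequality is the squared $R$-norm $\|\mathbf{P}_{\Gamma}[x]-\mathbf{P}_{\Gamma}[y]\|_{R}^{2}$ --- merely makes explicit what the paper leaves implicit.
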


\end{document}